\renewcommand\subsection{\@startsection{subsection}{2}%
  \z@{-.5\linespacing\@plus-.7\linespacing}{.5\linespacing}%
  {\normalfont\scshape}}
\renewcommand\subsubsection{\@startsection{subsubsection}{3}%
  \z@{.5\linespacing\@plus.7\linespacing}{-.5em}%
  {\normalfont\scshape}}
\numberwithin{equation}{section} \swapnumbers
\newtheorem{satz}{Satz}[section]
\newtheorem{theorem}[satz]{Theorem}
\newtheorem{proposition}[satz]{Proposition}
\newtheorem{corollary}[satz]{Corollary}
\newtheorem{lemma}[satz]{Lemma}
\newtheorem{definition}[satz]{Definition}
\newtheorem{remark}[satz]{Remark}
\newcommand{\bbr}{\mathbb{R}}
\newcommand{\bbn}{\mathbb{N}}
\newcommand{\bbp}{\mathbb{P}}
\newcommand{\bbb}{\mathbb{B}}
\newcommand{\calb}{\mathcal{B}}
\newcommand{\cale}{\mathcal{E}}
\newcommand{\calf}{\mathscr{F}}
\newcommand{\calm}{\mathcal{M}}
\newcommand{\caln}{\mathcal{N}}
\newcommand{\calp}{\mathcal{P}}
\newcommand{\cals}{\mathscr{S}}
\newcommand{\Id}{{\rm Id}}
\newcommand{\lin}{{\rm lin}}
\newcommand{\bbI}{\mathbbm{1}}
\begin{document}

\hyphenation{rea-li-za-tion pa-ra-me-tri-za-ti-ons pa-ra-me-tri-za-ti-on Schau-der sub-ma-ni-fold sub-ma-ni-folds}

\title[Invariant manifolds for stochastic partial differential equations]{A note on invariant manifolds for stochastic partial differential equations in the framework of the variational approach}
\author{Rajeev Bhaskaran \and Stefan Tappe}
\address{Indian Institute of Science Education and Research Thiruvanantapuram, Department of Mathematics, Maruthamala P.O, 695551,
Kerala, India}
\email{brajeev@iisertvm.ac.in}
\address{Albert Ludwig University of Freiburg, Department of Mathematical Stochastics, Ernst-Zermelo-Stra\ss{}e 1, D-79104 Freiburg, Germany}
\email{stefan.tappe@math.uni-freiburg.de}
\date{2 October, 2024}
\thanks{Stefan Tappe gratefully acknowledges financial support from the Deutsche Forschungsgemeinschaft (DFG, German Research Foundation) -- project number 444121509.}
\begin{abstract}
In this note we provide conditions for local invariance of finite dimensional submanifolds for solutions to stochastic partial differential equations (SPDEs) in the framework of the variational approach. For this purpose, we provide a connection to SPDEs in continuously embedded spaces.
\end{abstract}
\keywords{Stochastic partial differential equation, variational approach, continuously embedded Banach spaces, invariant manifold}
\subjclass[2020]{60H15, 60H10, 60G17}

\maketitle\thispagestyle{empty}

\section{Introduction}

In this paper we deal with invariant manifolds for stochastic partial differential equations (SPDEs) in the framework of the variational approach. More precisely, let $G,K$ be Banach spaces, and let $H$ be a separable Hilbert space such that $(G,H,K)$ is a triple of continuously embedded spaces. Consider a $(G,H,K)$-variational SPDE of the form
\begin{align}\label{SPDE}
\left\{
\begin{array}{rcl}
dY_t & = & L(Y_t) dt + A(Y_t) dW_t
\\ Y_0 & = & y_0
\end{array}
\right.
\end{align}
driven by an $\bbr^{\infty}$-Wiener process $W$ with measurable coefficients $L : G \to K$ and $A : G \to \ell^2(H)$. Variational SPDEs of this kind have been studied, for example, in \cite{Rozovskii, Prevot-Roeckner, Liu-Roeckner}. Given a finite dimensional submanifold $\calm$, we are interested in the question when $\calm$ is locally invariant for the SPDE (\ref{SPDE}); a property, which in particular ensures the existence of local solutions.

Usually, when dealing with variational SPDEs, a series of conditions on the coefficients $L$ and $A$ is required in order to ensure existence and uniqueness of solutions. These conditions are hemicontinuity, monotonicity, coercivity and growth conditions; see, e.g., conditions (H1)--(H4) in \cite[Chap. 4]{Liu-Roeckner} and conditions (H1), (H2'), (H3), (H4') in \cite[Chap. 5]{Liu-Roeckner}. As we will show, continuity of the coefficients is sufficient for the existence of solutions, provided that a finite dimensional invariant submanifold exists; see Theorems \ref{thm-K-separable} and \ref{thm-general}.

The invariance problem for submanifolds has been investigated, for example, in \cite{Milian-manifold} for finite dimensional SDEs, in \cite{Filipovic-inv, Nakayama} for SPDEs in the framework of the semigroup approach, and recently in \cite{BT} for SPDEs in continuously embedded spaces. The latter framework is similar to the framework of the variational approach, and this is a crucial observation for providing our results from this paper. Indeed, if $L$ is even a measurable mapping $L : G \to H$ with values in the Hilbert space $H$, then we may regard (\ref{SPDE}) also as a $(G,H)$-embedded SPDE in the spirit of \cite{BT}. This allows to reduce a problem involving three spaces to a problem involving only two spaces.

The remainder of this paper is organized as follows. In Section \ref{sec-SPDEs-embedded} we provide the required background about SPDEs in continuously embedded spaces, and in Section \ref{sec-SPDEs} we provide the required background about SPDEs in the framework of the variational approach. After these preparations, we present our results on invariant manifolds; namely in Section \ref{sec-K-separable} we consider the situation where $K$ is a separable Hilbert space, and in Section \ref{sec-general} we consider the general situation. Afterwards, we use our findings for the following two applications. In Section \ref{sec-HS} we construct examples of invariant submanifolds in Hermite Sobolev spaces, and in Section \ref{sec-Laplace} we characterize linear submanifolds for the stochastic $p$-Laplace equation. For convenience of the reader, in Appendix \ref{app-embedding} we provide the required auxiliary results about continuously embedded spaces.

\section{Stochastic partial differential equations in continuously embedded spaces}\label{sec-SPDEs-embedded}

In this section we provide the required prerequisites about SPDEs in continuously embedded spaces. It is similar to \cite[Sec. 2]{BT}, where further details can be found.

Let $G$ be a Banach space and let $H$ be a separable Hilbert space such that $(G,H)$ is a pair of continuously embedded spaces; see Definition \ref{def-embedding}. Let $\caln \subset G$ be a subset, endowed with the relative topology induced by $G$. In particular, we could choose $\caln := G$. Furthermore, let $L : \caln \to H$ and $A : \caln \to \ell^2(H)$ be measurable coefficients.

\begin{definition}\label{def-martingale-solution}
Let $y_0 \in \caln$ be arbitrary. A triplet $(\bbb,W,Y)$ is called a \emph{local martingale solution} to the $(G,H)$-embedded SPDE (\ref{SPDE}) with $Y_0 = y_0$ if the following conditions are fulfilled:
\begin{enumerate}
\item $\bbb = (\Omega,\calf,(\calf_t)_{t \in \bbr_+},\bbp)$ is a stochastic basis; that is, a filtered probability space satisfying the usual conditions.

\item $W$ is a standard $\bbr^{\infty}$-Wiener process on the stochastic basis $\bbb$.

\item $Y$ is an $\caln$-valued $\calp$-$\calb(\caln)$-measurable process such that for some strictly positive stopping time $\tau > 0$ we have $\bbp$-almost surely
\begin{align}\label{SPDE-int-cond}
\int_0^{t \wedge \tau} \big( \| L(Y_s) \|_H + \| A(Y_s) \|_{\ell^2(H)}^2 \big) ds < \infty, \quad t \in \bbr_+
\end{align}
and $\bbp$-almost surely
\begin{align}\label{SPDE-integral-form}
Y_{t \wedge \tau} = y_0 + \text{{\rm ($H$-)}} \int_0^{t \wedge \tau} L(Y_s) ds + \text{{\rm ($H$-)}} \int_0^{t \wedge \tau} A(Y_s) dW_s, \quad t \in \bbr_+.
\end{align}
The stopping time $\tau$ is also called the \emph{lifetime} of $Y$.
\end{enumerate}
\end{definition}

\begin{remark}
As usual $\calp$ denotes the predictable $\sigma$-algebra on $\Omega \times \bbr_+$. Moreover, as the notation indicates, the stochastic integral
\begin{align*}
\text{{\rm ($H$-)}} \int_0^{t \wedge \tau} L(Y_s) ds
\end{align*}
appearing in (\ref{SPDE-integral-form}) is a pathwise Bochner integral in the separable Hilbert space $(H,\| \cdot \|_H)$, and the stochastic integral
\begin{align*}
\text{{\rm ($H$-)}} \int_0^{t \wedge \tau} A(Y_s) dW_s
\end{align*}
appearing in (\ref{SPDE-integral-form}) is an It\^{o} integral in the separable Hilbert space $(H,\| \cdot \|_H)$.
\end{remark}

\begin{remark}
If there is no ambiguity, we will simply call $Y$ a local martingale solution to the $(G,H)$-embedded SPDE (\ref{SPDE}) with $Y_0 = y_0$.
\end{remark}

\begin{remark}\label{rem-cont-embedded}
Note that every local martingale solution $Y$ to the $(G,H)$-embedded SPDE (\ref{SPDE}) with lifetime $\tau$ is a $G$-valued process, and that by (\ref{SPDE-integral-form}) the sample paths of $Y^{\tau}$ are continuous with respect to $\| \cdot \|_H$.
\end{remark}

\begin{definition}\label{def-local-invariance}
A subset $\calm \subset \caln$ is called \emph{locally invariant} for the $(G,H)$-embedded SPDE (\ref{SPDE}) if for each $y_0 \in \calm$ there exists a local martingale solution $Y$ to the $(G,H)$-embedded SPDE (\ref{SPDE}) with $Y_0 = y_0$ and lifetime $\tau$ such that $Y^{\tau} \in \calm$ up to an evanescent set.
\end{definition}

Finite dimensional \emph{submanifolds} in embedded Banach spaces can be defined as in \cite[Sec. 3.1]{BT-preprint}, where Hilbert spaces have been considered. By virtue of the results from \cite[Sec. 6.1]{fillnm}, where finite dimensional submanifolds in Banach spaces have been studied, all definitions and results from \cite[Sec. 3.1]{BT-preprint} transfer to the present situation with Banach spaces. Let $k \in \bbn \cup \{ \infty \}$ be arbitrary. We recall that a finite dimensional $C^k$-submanifold $\calm$ of $H$ is called a $(G,H)$-submanifold of class $C^k$ if $\calm \subset G$ and $\tau_H \cap \calm = \tau_G \cap \calm$, where $\tau_G$ and $\tau_H$ denote the respective topologies.

\begin{proposition}\label{prop-embedding}\cite[Prop. 3.21]{BT-preprint}
Let $\calm$ be an finite dimensional $C^k$-submanifold of $H$. Then the following statements are equivalent:
\begin{enumerate}
\item[(i)] $\calm$ is a $(G,H)$-submanifold of class $C^k$.

\item[(ii)] $\calm \subset G$ and the identity $\Id : (\calm, \| \cdot \|_H) \to (\calm, \| \cdot \|_G)$ is continuous.
\end{enumerate}
\end{proposition}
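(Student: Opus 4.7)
The plan is to unwind the two definitions and exploit the automatic inclusion of traces of topologies forced by the continuous embedding $G \hookrightarrow H$. First I would observe that, because $G$ embeds continuously into $H$, on any subset $\calm \subseteq G$ we automatically have $\tau_H \cap \calm \subseteq \tau_G \cap \calm$. Indeed, for $U \in \tau_H$ the preimage $U \cap G$ under the inclusion $G \hookrightarrow H$ lies in $\tau_G$, so that $U \cap \calm = (U \cap G) \cap \calm \in \tau_G \cap \calm$. Thus the topological equality $\tau_H \cap \calm = \tau_G \cap \calm$ figuring in the definition of a $(G,H)$-submanifold reduces to the single reverse inclusion $\tau_G \cap \calm \subseteq \tau_H \cap \calm$.

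Second, I would identify this remaining inclusion with the continuity of $\Id : (\calm, \| \cdot \|_H) \to (\calm, \| \cdot \|_G)$. By the very definition of continuity between topological spaces, this identity map is continuous if and only if the preimage of every $\tau_G \cap \calm$-open set is $\tau_H \cap \calm$-open, which is precisely the inclusion $\tau_G \cap \calm \subseteq \tau_H \cap \calm$. Combining these two observations yields both directions of the equivalence in one stroke: (i) already contains the set-theoretic hypothesis $\calm \subset G$ and the topological equality, hence the continuity of $\Id$; conversely, (ii) furnishes the missing inclusion that, together with the automatic one, gives the equality required in (i).

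No serious obstacle is expected. The statement is essentially a reformulation of the defining topological condition in functional-analytic language, and the $C^k$-structure on $\calm$ plays no role in the argument — it is inherited from $\calm$ being a submanifold of $H$, while the additional content of being a $(G,H)$-submanifold is purely set-theoretic and topological. The only point worth stating carefully is that both sides of $\Id$ are equipped with the respective subspace topologies on $\calm$, so that one really is comparing $\tau_H \cap \calm$ and $\tau_G \cap \calm$ rather than the ambient topologies $\tau_H$ and $\tau_G$ themselves.
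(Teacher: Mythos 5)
Your argument is correct and complete: given the paper's definition of a $(G,H)$-submanifold (a finite dimensional $C^k$-submanifold of $H$ with $\calm \subset G$ and $\tau_H \cap \calm = \tau_G \cap \calm$), the inclusion $\tau_H \cap \calm \subseteq \tau_G \cap \calm$ is automatic from the continuous embedding $G \hookrightarrow H$, and the reverse inclusion is literally the continuity of $\Id : (\calm, \| \cdot \|_H) \to (\calm, \| \cdot \|_G)$, so (i) and (ii) are equivalent exactly as you say. The paper itself gives no proof of this proposition (it is quoted from \cite[Prop.~3.21]{BT-preprint}), and your point-set unwinding is the natural argument for the definition as stated here, including the correct observation that the $C^k$-structure plays no role in the equivalence.
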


Now, let us recall the following invariance result.

\begin{theorem}\label{thm-SPDE}\cite[Thm. 3.4]{BT}
Let $\calm$ be a finite dimensional $(G,H)$-submanifold of class $C^2$. Suppose that the mappings $L|_{\calm} : (\calm,\| \cdot \|_G) \to H$ and $A|_{\calm} : (\calm,\| \cdot \|_G) \to \ell^2(H)$ are continuous. Then the following statements are equivalent:
\begin{enumerate}
\item[(i)] The submanifold $\calm$ is locally invariant for the $(G,H)$-embedded SPDE (\ref{SPDE}).

\item[(ii)] We have
\begin{align}\label{tang-A}
&A^j|_{\calm} \in \Gamma(T \calm), \quad j \in \bbn,
\\ \label{tang-L} &[ L|_{\calm} ]_{\Gamma(T \calm)} - \frac{1}{2} \sum_{j=1}^{\infty} [A^j|_{\calm}, A^j|_{\calm}]_{\calm} = [0]_{\Gamma(T \calm)}.
\end{align}
\end{enumerate}
\end{theorem}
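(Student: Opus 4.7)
My plan is to reduce the problem to the classical invariance theorem for a finite-dimensional SDE via a local parametrization of $\calm$. Fix $y_0 \in \calm$ and choose a $C^2$-parametrization $\phi : U \to V \cap \calm$ with $\phi(0) = y_0$, where $U \subset \bbr^d$ is open and $V \subset H$ is an open neighborhood of $y_0$. By Proposition \ref{prop-embedding} the identity $\Id : (\calm, \| \cdot \|_H) \to (\calm, \| \cdot \|_G)$ is continuous, so $\phi$ serves simultaneously as a parametrization with respect to both topologies, and the continuity of $L|_\calm$ and $A|_\calm$ in the $G$-topology transfers to continuity of the pullbacks $L \circ \phi$ and $A \circ \phi$ on $U$.

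For the implication (ii) $\Rightarrow$ (i), I would use the first-order tangency (\ref{tang-A}) to define continuous coefficients $\sigma^j(x) := (D\phi(x))^{-1} A^j(\phi(x))$ on $U$, the inverse being taken on the tangent space. The second-order condition (\ref{tang-L}) encodes precisely the It\^{o}--Stratonovich correction, allowing me to read off a continuous drift $b : U \to \bbr^d$ satisfying
\begin{align*}
D\phi(x)\, b(x) = L(\phi(x)) - \tfrac{1}{2} \sum_{j=1}^{\infty} D^2\phi(x)\bigl(\sigma^j(x), \sigma^j(x)\bigr).
\end{align*}
Skorokhod's existence theorem would then yield a weak local solution $X$ to the finite-dimensional SDE $dX_t = b(X_t) dt + \sum_j \sigma^j(X_t) dW_t^j$ with $X_0 = 0$, up to the exit time $\tau$ from a compactly contained subset of $U$. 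Setting $Y := \phi(X)$ and applying It\^{o}'s formula in $H$ would give a local martingale solution to (\ref{SPDE}) whose paths lie in $\calm$.

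For the converse (i) $\Rightarrow$ (ii), given $y_0 \in \calm$ and a local martingale solution $Y$ with $Y^\tau \in \calm$, localization lets me assume $Y^\tau \in V \cap \calm$, and the pullback $X := \phi^{-1}(Y)$ is then a continuous $\bbr^d$-valued semimartingale. Matching the finite-variation and local-martingale parts of $Y$ against the It\^{o} expansion of $\phi(X)$ forces the coordinate identities $D\phi(X_t)\sigma^j(X_t) = A^j(Y_t)$ and the analogous relation for the drift. Evaluating at $t=0$ and letting $y_0$ vary over $\calm$ yields tangency of $A^j$ and of $L - \tfrac{1}{2} \sum_j [A^j, A^j]_\calm$ at every point of $\calm$, which is (\ref{tang-A}) and (\ref{tang-L}).

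The hard part is two-fold. First, the second-order bracket notation $[A^j, A^j]_\calm$ must be correctly identified with the ambient-space expression $D(A^j|_\calm) \cdot A^j|_\calm$ modulo $T\calm$; in local coordinates this is exactly $D^2\phi(\sigma^j, \sigma^j)$ modulo the image of $D\phi$, and the manipulations of the infinite sum $\sum_j$ must be justified via $A|_\calm \in \ell^2(H)$ together with the continuity of $A|_\calm$ in the $G$-topology. Second, extracting pointwise tangency at every $y_0 \in \calm$ from the existence of a single weak solution (as required by Definition \ref{def-local-invariance}) relies on uniqueness of the semimartingale decomposition at $t=0$ and on the continuity hypotheses, which propagate the infinitesimal identities to all of $\calm$; here the coincidence of the $G$- and $H$-topologies on $\calm$ supplied by Proposition \ref{prop-embedding} is what makes the argument intrinsic and independent of which norm is used to test tangency.
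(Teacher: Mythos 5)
This paper does not actually prove Theorem \ref{thm-SPDE}: it is imported verbatim from \cite[Thm. 3.4]{BT}, and the proof given there runs along essentially the lines you propose --- localization through a $C^2$ parametrization (using that the $G$- and $H$-topologies agree on $\calm$, Proposition \ref{prop-embedding}), reduction to a finite-dimensional SDE with continuous coefficients solved by a Skorokhod-type weak existence result for (ii) $\Rightarrow$ (i), and identification of the semimartingale characteristics of the pulled-back solution for (i) $\Rightarrow$ (ii). So your plan matches the cited argument; the only point to rephrase is the appeal to ``uniqueness of the semimartingale decomposition at $t=0$'': the decomposition yields the tangency identities only for almost every $t$ before the lifetime, and one then obtains them at $y_0$ by letting $t \downarrow 0$, using the continuity of $L|_{\calm}$ and $A|_{\calm}$ in the $G$-topology and the continuity of the stopped paths supplied by Remark \ref{rem-cont-embedded} and Proposition \ref{prop-embedding}.
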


Here $\Gamma(T \calm)$ denotes the space of all vector fields on $\calm$; that is, the space of all mappings $A : \calm \to H$ such that $A(y) \in T_y \calm$ for each $y \in \calm$, where $T_y \calm$ denotes the tangent space to $\calm$ at $y$. Moreover, denoting by $A(\calm)$ the linear space of all mappings $A : \calm \to H$, and by $A(\calm) / \Gamma(T \calm)$ the quotient space, for two vector fields $A,B \in \Gamma(T \calm)$ the mapping $[A,B]_{\calm} \in A(\calm) / \Gamma(T \calm)$ is defined as follows. For each local parametrization $\phi : V \to U \cap \calm$ a local representative of $[A,B]_{\calm}$ on $U \cap \calm$ is given by
\begin{align}\label{bracket}
y \mapsto D^2 \phi(x) \big( D \phi(x)^{-1} A(y), D \phi(x)^{-1} B(y) \big), \quad y \in U \cap \calm,
\end{align}
where $x := \phi^{-1}(y) \in V$. For further details we refer to \cite[Sec. 3]{BT}.

\section{Stochastic partial differential equations in the framework of the variational approach}\label{sec-SPDEs}

In this section we provide the required prerequisites about SPDEs in the framework of the variational approach. Let $G,K$ be Banach spaces and let $H$ be a separable Hilbert space such that $(G,H,K)$ is a triplet of continuously embedded spaces. Let $\caln \subset G$ be a subset, endowed with the relative topology induced by $G$. Furthermore, let $L : \caln \to K$ and $A : \caln \to \ell^2(H)$ be measurable coefficients.

\begin{remark}
Note the difference to the framework from Section \ref{sec-SPDEs-embedded}, where we had a drift $L : \caln \to H$ with values in the Hilbert space $H$.
\end{remark}

\begin{definition}\label{def-martingale-solution-var}
Let $y_0 \in \caln$ be arbitrary. A triplet $(\bbb,W,Y)$ is called a \emph{local martingale solution} to the $(G,H,K)$-variational SPDE (\ref{SPDE}) with $Y_0 = y_0$ if the following conditions are fulfilled:
\begin{enumerate}
\item $\bbb = (\Omega,\calf,(\calf_t)_{t \in \bbr_+},\bbp)$ is a stochastic basis.

\item $W$ is a standard $\bbr^{\infty}$-Wiener process on the stochastic basis $\bbb$.

\item $Y$ is an $\caln$-valued $\calp$-$\calb(\caln)$-measurable process such that for some strictly positive stopping time $\tau > 0$ the image
\begin{align*}
\{ L(Y_t^{\tau}) : t \in \bbr_+ \} \subset K
\end{align*}
is $\bbp$-almost surely a separable subset of $(K,\| \cdot \|_K)$, we have $\bbp$-almost surely
\begin{align}\label{int-cond-var}
\int_0^{t \wedge \tau} \big( \| L(Y_s) \|_K + \| A(Y_s) \|_{\ell^2(H)}^2 \big) ds < \infty, \quad t \in \bbr_+
\end{align}
and $\bbp$-almost surely
\begin{align}\label{SPDE-integral-form-var}
Y_{t \wedge \tau} = y_0 + \text{{\rm ($K$-)}} \int_0^{t \wedge \tau} L(Y_s) ds + \text{{\rm ($H$-)}} \int_0^{t \wedge \tau} A(Y_s) dW_s, \quad t \in \bbr_+.
\end{align}
The stopping time $\tau$ is also called the \emph{lifetime} of $Y$.
\end{enumerate}
\end{definition}

\begin{remark}
As the notation indicates, the stochastic integral
\begin{align*}
\text{{\rm ($K$-)}} \int_0^{t \wedge \tau} L(Y_s) ds
\end{align*}
appearing in (\ref{SPDE-integral-form-var}) is a pathwise Bochner integral in the Banach space $(K,\| \cdot \|_K)$, and the stochastic integral
\begin{align*}
\text{{\rm ($H$-)}} \int_0^{t \wedge \tau} A(Y_s) dW_s
\end{align*}
appearing in (\ref{SPDE-integral-form-var}) is an It\^{o} integral in the separable Hilbert space $(H,\| \cdot \|_H)$.
\end{remark}

\begin{remark}
If there is no ambiguity, we will simply call $Y$ a local martingale solution to the $(G,H,K)$-variational SPDE (\ref{SPDE}) with $Y_0 = y_0$.
\end{remark}

\begin{remark}\label{rem-cont-var}
Note that every local martingale solution $Y$ to the $(G,H,K)$-variational SPDE (\ref{SPDE}) with lifetime $\tau$ is a $G$-valued process, and that by (\ref{SPDE-integral-form-var}) the sample paths of $Y^{\tau}$ are continuous with respect to $\| \cdot \|_K$.
\end{remark}

\begin{definition}\label{def-local-invariance-var}
A subset $\calm \subset \caln$ is called \emph{locally invariant} for the $(G,H,K)$-variational SPDE (\ref{SPDE}) if for each $y_0 \in \calm$ there exists a local martingale solution $Y$ to the $(G,H,K)$-variational SPDE (\ref{SPDE}) with $Y_0 = y_0$ and lifetime $\tau > 0$ such that $Y^{\tau} \in \calm$ up to an evanescent set.
\end{definition}

\section{The situation where $K$ is a separable Hilbert space}\label{sec-K-separable}

In this section we investigate invariance of finite dimensional submanifolds in the situation where $K$ is a separable Hilbert space. More precisely, let $G$ be a Banach space and let $H,K$ be separable Hilbert spaces such that $(G,H,K)$ is a triplet of continuously embedded spaces. Furthermore, let $L : G \to K$ and $A : G \to \ell^2(H)$ be measurable mappings. Note that the present situation applies to the variational SPDEs considered in \cite{Rozovskii}.

\begin{remark}
Note that the spaces $(\ell^2(H),\ell^2(K))$ are also continuously embedded. Hence, we may and will consider $A$ also as a measurable mapping $A : G \to \ell^2(K)$.
\end{remark}

\begin{proposition}\label{prop-solutions-1}
Let $y_0 \in G$ be arbitrary, and let $Y$ be a local martingale solution to the $(G,H,K)$-variational SPDE (\ref{SPDE}) with lifetime $\tau$ and $Y_0 = y_0$. Then $Y$ is also a local martingale solution to the $(G,K)$-embedded SPDE (\ref{SPDE}) with lifetime $\tau$ and $Y_0 = y_0$.
\end{proposition}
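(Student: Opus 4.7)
The goal is to verify the conditions of Definition \ref{def-martingale-solution} (with the pair $(G,K)$ in place of $(G,H)$) starting from the conditions of Definition \ref{def-martingale-solution-var}. The stochastic basis $\bbb$, the Wiener process $W$, the process $Y$, the initial value $y_0$ and the lifetime $\tau$ are already given; what must be checked is that $A$ viewed as a mapping into $\ell^2(K)$ yields a valid integrand, that the integrability condition (\ref{SPDE-int-cond}) holds with $H$ replaced by $K$, and that the integral equation (\ref{SPDE-integral-form}) is valid when the stochastic integral is interpreted in $K$.

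The plan is to exploit the continuous embedding $H \hookrightarrow K$ in two ways. First, since this inclusion is a continuous linear map, the induced inclusion $\ell^2(H) \hookrightarrow \ell^2(K)$ is also continuous (as remarked in the paper right before the proposition, and as collected in Appendix \ref{app-embedding}). Consequently there is a constant $C > 0$ with $\| A(y) \|_{\ell^2(K)} \le C \, \| A(y) \|_{\ell^2(H)}$ for every $y \in G$, so the integrability condition (\ref{int-cond-var}) for the variational solution immediately entails the required condition
\begin{align*}
\int_0^{t \wedge \tau} \bigl( \| L(Y_s) \|_K + \| A(Y_s) \|_{\ell^2(K)}^2 \bigr) ds < \infty, \quad t \in \bbr_+.
\end{align*}

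Second, I would invoke the standard fact that continuous linear operators commute with It\^o stochastic integrals in the following form: if $i : H \hookrightarrow K$ denotes the continuous inclusion, then for the $\ell^2(H)$-valued predictable process $A(Y)$ the $\ell^2(K)$-valued stochastic integral is well-defined and we have $\bbp$-almost surely
\begin{align*}
\text{{\rm ($K$-)}} \int_0^{t \wedge \tau} A(Y_s) \, dW_s = i \Bigl( \text{{\rm ($H$-)}} \int_0^{t \wedge \tau} A(Y_s) \, dW_s \Bigr), \quad t \in \bbr_+,
\end{align*}
so that via the embedding the two stochastic integrals coincide in $K$. Substituting this identity into (\ref{SPDE-integral-form-var}), and noting that the drift term on the right-hand side of (\ref{SPDE-integral-form-var}) is already a $K$-valued Bochner integral, yields exactly (\ref{SPDE-integral-form}) with the Hilbert space $K$ playing the role of $H$.

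Finally, the remaining ingredients of Definition \ref{def-martingale-solution} are inherited without change: the stochastic basis, the Wiener process, the strict positivity of $\tau$ and the $\calp$-$\calb(\caln)$-measurability of $Y$ are all part of the data of the variational solution. I do not expect real obstacles here; the only point worth stressing is the commutation of the continuous inclusion with the It\^o integral, which is a routine consequence of the definition of the stochastic integral via simple processes together with the continuity of $i$, and which is what allows the reduction from the triple $(G,H,K)$ to the pair $(G,K)$.
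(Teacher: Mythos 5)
Your proposal is correct and follows essentially the same route as the paper: the paper packages both ingredients (the $\ell^2(K)$-integrability bound coming from the continuous embedding and the coincidence of the ($H$-) and ($K$-) It\^{o} integrals) into Lemma \ref{lemma-Wiener-int-G-H} and then substitutes into (\ref{SPDE-integral-form-var}), exactly as you do. Your explicit remark that the inclusion commutes with the stochastic integral is precisely the content of that lemma, so no new ideas or gaps are involved.
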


\begin{proof}
Taking into account condition (\ref{int-cond-var}), by Lemma \ref{lemma-Wiener-int-G-H} we have $\bbp$-almost surely
\begin{align}\label{int-cond-1}
\int_0^{t \wedge \tau} \big( \| L(Y_s) \|_K + \| A(Y_s) \|_{\ell^2(K)}^2 \big) ds < \infty, \quad t \in \bbr_+
\end{align}
as well as
\begin{align}\label{H-K-int-coincides}
\text{{\rm ($H$-)}} \int_0^{t \wedge \tau} A(Y_s) d W_s = \text{{\rm ($K$-)}} \int_0^{t \wedge \tau} A(Y_s) d W_s, \quad t \in \bbr_+.
\end{align}
Hence, by (\ref{SPDE-integral-form-var}) we obtain $\bbp$-almost surely
\begin{align*}
Y_{t \wedge \tau} &= y_0 + \text{{\rm ($K$-)}} \int_0^{t \wedge \tau} L(Y_s) ds + \text{{\rm ($H$-)}} \int_0^{t \wedge \tau} A(Y_s) dW_s
\\ &= y_0 + \text{{\rm ($K$-)}} \int_0^{t \wedge \tau} L(Y_s) ds + \text{{\rm ($K$-)}} \int_0^{t \wedge \tau} A(Y_s) dW_s, \quad t \in \bbr_+,
\end{align*}
showing that $Y$ is a local martingale solution to the $(G,K)$-embedded SPDE (\ref{SPDE}) with lifetime $\tau$ and $Y_0 = y_0$.
\end{proof}

\begin{proposition}\label{prop-solutions-2}
Let $\calm \subset G$ be a subset such that $A|_{\calm} : (\calm,\| \cdot \|_G) \to \ell^2(H)$ is continuous. Let $y_0 \in \calm$ be arbitrary, and let $Y$ be a local martingale solution to the $(G,K)$-embedded SPDE (\ref{SPDE}) with lifetime $\tau$ and $Y_0 = y_0$ such that $Y^{\tau} \in \calm$ and the sample paths of $Y^{\tau}$ are continuous with respect to $\| \cdot \|_G$. Then $Y$ is also a local martingale solution to the $(G,H,K)$-variational SPDE (\ref{SPDE}) with lifetime $\tau$ and $Y_0 = y_0$.
\end{proposition}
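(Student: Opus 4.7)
The plan is to verify each of the three defining conditions in Definition \ref{def-martingale-solution-var} for a local martingale solution to the $(G,H,K)$-variational SPDE, using the $(G,K)$-embedded solution hypothesis plus the two regularity assumptions (continuity of $A|_{\calm}$ in the $G$-topology, and $\| \cdot \|_G$-continuity of the sample paths of $Y^{\tau}$). The stochastic basis, the Wiener process $W$, the process $Y$ itself, the initial value, the lifetime $\tau$, and the $\calp$-$\calb(\caln)$-measurability of $Y$ are all taken directly from the given embedded solution. So only the three items specific to Definition \ref{def-martingale-solution-var} need attention: separability of the range of $L \circ Y^{\tau}$, the integrability condition (\ref{int-cond-var}), and the integral identity (\ref{SPDE-integral-form-var}).

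Step 1 (separability): Since in this section $K$ is assumed to be a separable Hilbert space, the set $\{ L(Y_t^{\tau}) : t \in \bbr_+ \} \subset K$ is automatically separable, so this condition is free.

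Step 2 (integrability condition): From the $(G,K)$-embedded SPDE, condition (\ref{SPDE-int-cond}) (with $H$ replaced by $K$) already gives $\int_0^{t \wedge \tau} \| L(Y_s) \|_K \, ds < \infty$ $\bbp$-a.s. It remains to upgrade the diffusion bound from $\ell^2(K)$ to $\ell^2(H)$. This is exactly where the hypotheses enter: since $Y^{\tau} \in \calm$ up to an evanescent set and the sample paths of $Y^{\tau}$ are continuous in $\| \cdot \|_G$, continuity of $A|_{\calm} : (\calm,\| \cdot \|_G) \to \ell^2(H)$ implies that $\bbp$-almost surely the path $s \mapsto A(Y_s^{\tau})$ is continuous into $\ell^2(H)$ on $\bbr_+$, hence $s \mapsto \| A(Y_s^{\tau}) \|_{\ell^2(H)}^2$ is locally bounded, giving $\int_0^{t \wedge \tau} \| A(Y_s) \|_{\ell^2(H)}^2 \, ds < \infty$ $\bbp$-a.s. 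Combined, this yields (\ref{int-cond-var}).

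Step 3 (integral identity): The embedded solution gives us
\begin{align*}
Y_{t \wedge \tau} = y_0 + \text{($K$-)} \int_0^{t \wedge \tau} L(Y_s) \, ds + \text{($K$-)} \int_0^{t \wedge \tau} A(Y_s) \, dW_s.
\end{align*}
By Step 2 the $H$-Itô integral of $A(Y_\cdot)$ against $W$ is well-defined, and since $(H,K)$ are continuously embedded, Lemma \ref{lemma-Wiener-int-G-H} (used in the opposite direction compared to the proof of Proposition \ref{prop-solutions-1}) gives the identity (\ref{H-K-int-coincides}). Substituting transforms the $K$-Itô integral into the $H$-Itô integral and recovers (\ref{SPDE-integral-form-var}).

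The main obstacle is Step 2: the $(G,K)$-embedded solution only guarantees $\ell^2(K)$-integrability of the diffusion, and getting the strictly stronger $\ell^2(H)$-integrability required by the variational framework is only possible by exploiting that $Y^{\tau}$ lives in $\calm$ with $G$-continuous paths, combined with the $G$-continuity of $A|_{\calm}$ into $\ell^2(H)$; without these two ingredients the passage from an embedded to a variational solution would fail.
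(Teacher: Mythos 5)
Your proposal is correct and follows essentially the same route as the paper's proof: separability is free because $K$ is a separable Hilbert space, the $\ell^2(H)$-integrability of $A(Y^{\tau})$ is obtained from the $G$-continuity of the paths of $Y^{\tau}$ in $\calm$ together with the continuity of $A|_{\calm}$, and Lemma \ref{lemma-Wiener-int-G-H} then identifies the $H$- and $K$-It\^{o} integrals so that the embedded integral identity becomes (\ref{SPDE-integral-form-var}). (Only a small remark: the lemma is applied in the same direction as in Proposition \ref{prop-solutions-1} — its hypothesis is the $\ell^2(H)$-square-integrability, which here is supplied by your Step 2 rather than by the definition of a variational solution.)
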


\begin{proof}
Note that for each $\omega \in \Omega$ the image
\begin{align*}
\{ L(Y_t^{\tau(\omega)}(\omega)) : t \in \bbr_+ \} \subset K
\end{align*}
is a separable subset of $(K,\| \cdot \|_K)$, because $K$ is a separable Hilbert space. Since $A|_{\calm} : (\calm,\| \cdot \|_G) \to \ell^2(H)$ is continuous and the sample paths of $Y^{\tau}$ are continuous with respect to $\| \cdot \|_G$, it follows that the sample paths of the composition $A(Y^{\tau})$ are continuous with respect to $\| \cdot \|_{\ell^2(H)}$, which implies
\begin{align*}
\int_0^{t \wedge \tau} \| A(Y_s) \|_{\ell^2(H)}^2 ds < \infty, \quad t \in \bbr_+.
\end{align*}
Therefore, condition (\ref{int-cond-1}) implies (\ref{int-cond-var}), and hence, by Lemma \ref{lemma-Wiener-int-G-H} we have the identity (\ref{H-K-int-coincides}). Consequently, since $Y$ be a local martingale solution to the $(G,K)$-embedded SPDE (\ref{SPDE}) with lifetime $\tau$ and $Y_0 = y_0$, we obtain $\bbp$-almost surely
\begin{align*}
Y_{t \wedge \tau} &= y_0 + \text{{\rm ($K$-)}} \int_0^{t \wedge \tau} L(Y_s) ds + \text{{\rm ($K$-)}} \int_0^{t \wedge \tau} A(Y_s) dW_s
\\ &= y_0 + \text{{\rm ($K$-)}} \int_0^{t \wedge \tau} L(Y_s) ds + \text{{\rm ($H$-)}} \int_0^{t \wedge \tau} A(Y_s) dW_s, \quad t \in \bbr_+,
\end{align*}
showing that $Y$ is a local martingale solution to the $(G,H,K)$-variational SPDE (\ref{SPDE}) with lifetime $\tau$ and $Y_0 = y_0$.
\end{proof}

\begin{theorem}\label{thm-K-separable}
Let $\calm$ be a finite dimensional $(G,K)$-submanifold of class $C^2$. Suppose that $L|_{\calm} : (\calm,\| \cdot \|_G) \to K$ and $A|_{\calm} : (\calm,\| \cdot \|_G) \to \ell^2(H)$ are continuous. Then the following statements are equivalent:
\begin{enumerate}
\item[(i)] The submanifold $\calm$ is locally invariant for the $(G,K)$-embedded SPDE (\ref{SPDE}).

\item[(ii)] The submanifold $\calm$ is locally invariant for the $(G,H,K)$-variational SPDE (\ref{SPDE}).

\item[(iii)] We have (\ref{tang-A}) and (\ref{tang-L}).
\end{enumerate}
In particular, if the equivalent conditions (i)--(iii) are fulfilled, then for each $y_0 \in \calm$ there exists a local martingale solution $Y$ to the $(G,K)$-embedded SPDE (\ref{SPDE}) and to the $(G,H,K)$-variational SPDE (\ref{SPDE}) with $Y_0 = y_0$.
\end{theorem}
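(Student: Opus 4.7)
The plan is to bootstrap Theorem~\ref{thm-SPDE} via the two bridge propositions of this section. Concretely, I would first establish (i)$\Leftrightarrow$(iii) by applying Theorem~\ref{thm-SPDE} directly to the $(G,K)$-embedded SPDE~\eqref{SPDE}, and then obtain (i)$\Leftrightarrow$(ii) from Propositions~\ref{prop-solutions-1} and~\ref{prop-solutions-2}. The ``in particular'' statement is a free byproduct: each $y_0 \in \calm$ yields, by local invariance in either formulation, a local martingale solution starting at $y_0$.

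For (i)$\Leftrightarrow$(iii), I would regard~\eqref{SPDE} as a $(G,K)$-embedded SPDE with drift $L : G \to K$ and diffusion $A : G \to \ell^2(K)$, using the continuous embedding $\ell^2(H) \hookrightarrow \ell^2(K)$ induced by $H \hookrightarrow K$ (as recorded in the remark preceding the statement). The hypotheses of Theorem~\ref{thm-SPDE} are then immediate: $\calm$ is a finite dimensional $(G,K)$-submanifold of class $C^2$ by assumption, continuity of $L|_{\calm} : (\calm, \|\cdot\|_G) \to K$ is an assumption, and continuity of $A|_{\calm} : (\calm, \|\cdot\|_G) \to \ell^2(K)$ follows by composing the assumed continuity into $\ell^2(H)$ with the embedding $\ell^2(H) \hookrightarrow \ell^2(K)$. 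Theorem~\ref{thm-SPDE} then delivers (i)$\Leftrightarrow$(iii), with exactly the tangential conditions \eqref{tang-A} and \eqref{tang-L}.

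For (ii)$\Rightarrow$(i), Proposition~\ref{prop-solutions-1} converts any local martingale solution of the variational SPDE into a local martingale solution of the embedded SPDE while preserving $Y^{\tau} \in \calm$. For the converse (i)$\Rightarrow$(ii), I would start from a local martingale solution $Y$ of the $(G,K)$-embedded SPDE with lifetime $\tau$ and $Y^{\tau} \in \calm$; Remark~\ref{rem-cont-embedded} provides that the paths of $Y^{\tau}$ are $\|\cdot\|_K$-continuous, and Proposition~\ref{prop-embedding} applied to $\calm$ as a $(G,K)$-submanifold upgrades this to $\|\cdot\|_G$-continuity, because $Y^{\tau}$ takes values in $\calm$ and the identity $(\calm, \|\cdot\|_K) \to (\calm, \|\cdot\|_G)$ is continuous there. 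The hypotheses of Proposition~\ref{prop-solutions-2} are then met, and one concludes that $Y$ is simultaneously a local martingale solution of the variational SPDE.

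I do not expect a serious obstacle: the proof is careful bookkeeping that leverages the previously developed machinery. The only point demanding attention is that the tangential conditions \eqref{tang-A}--\eqref{tang-L} are to be interpreted with respect to the tangent bundle of $\calm$ viewed as a $(G,K)$-submanifold; since $A^j(y) \in H \hookrightarrow K$ and $L(y) \in K$, the membership $A^j(y) \in T_y\calm \subset K$ is well posed, and the bracket~\eqref{bracket} depends only on intrinsic local parametrizations, so these conditions transport unchanged between the embedded and variational formulations.
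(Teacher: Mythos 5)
Your proposal is correct and follows the paper's own argument essentially verbatim: (i)$\Leftrightarrow$(iii) via Theorem~\ref{thm-SPDE} applied after viewing $A|_{\calm}$ as a continuous map into $\ell^2(K)$, (ii)$\Rightarrow$(i) via Proposition~\ref{prop-solutions-1}, and (i)$\Rightarrow$(ii) via Remark~\ref{rem-cont-embedded}, Proposition~\ref{prop-embedding} and Proposition~\ref{prop-solutions-2}. No gaps to report.
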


\begin{proof}
(i) $\Rightarrow$ (ii): Let $y_0 \in \calm$ be arbitrary. Then there exists a local martingale solution $Y$ to the $(G,K)$-embedded SPDE (\ref{SPDE}) with $Y_0 = y_0$ and lifetime $\tau$ such that $Y^{\tau} \in \calm$ up to an evanescent set. By Remark \ref{rem-cont-embedded} the sample paths of $Y^{\tau}$ are continuous with respect to $\| \cdot \|_K$, and hence, by Proposition \ref{prop-embedding} it follows that the sample paths of $Y^{\tau}$ are even continuous with respect to $\| \cdot \|_G$. Therefore, by Proposition \ref{prop-solutions-2} the process $Y$ is also a local martingale solution to the $(G,H,K)$-variational SPDE (\ref{SPDE}) with lifetime $\tau$ and $Y_0 = y_0$.

\noindent(ii) $\Rightarrow$ (i): This is a consequence of Proposition \ref{prop-solutions-1}.

\noindent(i) $\Leftrightarrow$ (iii): Since $(H,K)$ are continuously embedded Hilbert spaces, it follows that the spaces of sequences $(\ell^2(H),\ell^2(K))$ are also continuously embedded. Hence, we may regard $A|_{\calm}$ as a continuous mapping $A|_{\calm} : (\calm, \| \cdot \|_G) \to \ell^2(K)$, and thus applying Theorem \ref{thm-SPDE} proves the stated equivalence.
\end{proof}

\begin{corollary}
Let $\calm$ is a finite dimensional $(G,H,K)$-submanifold of class $C^2$. Suppose that for each $j \in \bbn$ we have $A^j \in C(G;H)$ with an extension $A^j \in C^1(H;K)$, and that for each $y \in \calm$ the series $\sum_{j=1}^{\infty} D A^j(y) A^j(y)$ converges in $K$. Then the following statements are equivalent:
\begin{enumerate}
\item[(i)] The submanifold $\calm$ is locally invariant for the $(G,K)$-embedded SPDE (\ref{SPDE}).

\item[(ii)] The submanifold $\calm$ is locally invariant for the $(G,H,K)$-variational SPDE (\ref{SPDE}).

\item[(iii)] We have (\ref{tang-A}) and
\begin{align*}
L|_{\calm} - \frac{1}{2} \sum_{j=1}^{\infty} D A^j \cdot A^j|_{\calm} \in \Gamma(T \calm).
\end{align*}
\end{enumerate}
In particular, if the equivalent conditions (i)--(iii) are fulfilled, then for each $y_0 \in \calm$ there exists a local martingale solution $Y$ to the $(G,K)$-embedded SPDE (\ref{SPDE}) and to the $(G,H,K)$-variational SPDE (\ref{SPDE}) with $Y_0 = y_0$.
\end{corollary}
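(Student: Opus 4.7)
The plan is to apply Theorem \ref{thm-K-separable} and show that its condition (iii), namely (\ref{tang-A}) and (\ref{tang-L}), is equivalent to condition (iii) of the present corollary. A $(G,H,K)$-submanifold of class $C^2$ is in particular a $(G,K)$-submanifold of class $C^2$, and the continuity of $A|_{\calm} : (\calm,\|\cdot\|_G) \to \ell^2(H)$ follows from $A^j \in C(G;H)$ via a standard local-chart argument together with pointwise control of the $\ell^2$-tails. Continuity of $L|_{\calm}$ into $K$ will be read off a posteriori from condition (iii), since each $DA^j \cdot A^j|_{\calm}$ is continuous from $(\calm,\|\cdot\|_G)$ to $K$ ($A^j|_{\calm}$ is continuous into $H$ and $DA^j \in C(H;L(H;K))$). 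Since (\ref{tang-A}) is identical in both statements, the real work is to relate the two Stratonovich-type corrections modulo $\Gamma(T\calm)$.

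The key step is a termwise identity obtained in a local chart. Fix $y \in \calm$ and a $C^2$ local parametrization $\phi : V \to U \cap \calm$ with $y = \phi(x)$. By (\ref{tang-A}) there exists $\xi^j : V \to \bbr^{\dim\calm}$ such that $A^j(\phi(x')) = D\phi(x')\xi^j(x')$. Since a $C^1$ left inverse of $D\phi(x')$ can be selected (e.g.\ $(D\phi^* D\phi)^{-1}D\phi^*$ with the $H$-adjoint) and $A^j \in C^1(H;K)$ via the extension, $\xi^j$ is of class $C^1$. I would then differentiate $A^j \circ \phi : V \to K$ in two ways. The chain rule through the $C^1$-extension yields
\begin{align*}
D(A^j \circ \phi)(x') = DA^j(\phi(x'))D\phi(x'),
\end{align*}
while the product rule on $D\phi(x')\xi^j(x')$ yields
\begin{align*}
D\big(D\phi(\cdot)\xi^j(\cdot)\big)(x') = D^2\phi(x')\big(\xi^j(x'),\cdot\big) + D\phi(x')D\xi^j(x').
\end{align*}
Equating and evaluating at $\xi^j(x)$ gives
\begin{align*}
DA^j(y) A^j(y) = D^2\phi(x)\big(\xi^j(x),\xi^j(x)\big) + D\phi(x) D\xi^j(x)\xi^j(x).
\end{align*}
By (\ref{bracket}) the first summand on the right is the local representative of $[A^j|_{\calm},A^j|_{\calm}]_{\calm}$, and the second lies in $\ran D\phi(x) = T_y\calm$, hence in $\Gamma(T\calm)$. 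This establishes the termwise equality $[DA^j \cdot A^j|_{\calm}] = [A^j|_{\calm},A^j|_{\calm}]_{\calm}$ in $A(\calm)/\Gamma(T\calm)$.

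The main obstacle I anticipate is passing from this termwise identity to a statement about the full series in the quotient space. Here the hypothesis that $\sum_{j=1}^\infty DA^j(y)A^j(y)$ converges in $K$ for every $y \in \calm$ is precisely what is required: it ensures that $\sum_{j=1}^\infty DA^j \cdot A^j|_{\calm}$ is a well-defined element of $A(\calm)$, and I would take this mapping as the reference representative, so that the series $\sum_{j=1}^\infty [A^j|_{\calm},A^j|_{\calm}]_{\calm}$ is unambiguously interpreted as its class in $A(\calm)/\Gamma(T\calm)$. Under this identification, (\ref{tang-L}) translates into $L|_{\calm} - \tfrac{1}{2}\sum_{j=1}^\infty DA^j \cdot A^j|_{\calm} \in \Gamma(T\calm)$, matching the second half of condition (iii) of the corollary. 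Theorem \ref{thm-K-separable} then delivers both the equivalences (i) $\Leftrightarrow$ (ii) $\Leftrightarrow$ (iii) and the existence of local martingale solutions asserted at the end of the statement.
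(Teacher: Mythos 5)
Your core argument is the paper's argument: the termwise chart identity you derive, namely that $DA^j(y)A^j(y)$ splits into $D^2\phi(x)(a^j(x),a^j(x))$ plus a term in $\ran D\phi(x)=T_y\calm$ with $a^j(x)=D\phi(x)^{-1}A^j(\phi(x))$, is exactly the decomposition (3.2) of \cite[Prop. 3.25]{BT-preprint} that the paper cites, and the conclusion is then drawn, as in the paper, by comparing with the local representative (\ref{bracket}) of $[A^j|_{\calm},A^j|_{\calm}]_{\calm}$ and invoking Theorem \ref{thm-K-separable}; your observation that the convergence hypothesis makes $\sum_j DA^j\cdot A^j|_{\calm}$ a well-defined element of $A(\calm)$, so that (\ref{tang-L}) can be compared with the corollary's condition in the quotient $A(\calm)/\Gamma(T\calm)$, is also the intended role of that hypothesis.

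Three side remarks need correction, though none affects the main line. First, in your hand-derivation of the decomposition, the left inverse $(D\phi^*D\phi)^{-1}D\phi^*$ built with the $H$-adjoint does not immediately give $\xi^j\in C^1$, because $A^j\circ\phi$ is only known to be continuous into $H$ and $C^1$ into $K$ (the extension lives on $H$ with values in $K$); use the $K$-adjoint instead, or simply cite \cite[Prop. 3.25]{BT-preprint}, which asserts $a^j\in C^1(V;\bbr^m)$. Second, continuity of $A|_{\calm}:(\calm,\|\cdot\|_G)\to\ell^2(H)$ does not follow from $A^j\in C(G;H)$ for each $j$ together with pointwise finiteness of the $\ell^2$-norm; componentwise continuity gives no uniform control of the tail. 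Third, continuity of $L|_{\calm}$ into $K$ cannot be ``read off a posteriori from (iii)'': elements of $\Gamma(T\calm)$ are arbitrary tangent-valued mappings with no continuity requirement, and in any case the equivalences require the hypotheses of Theorem \ref{thm-K-separable} before one knows which of (i)--(iii) holds. In the paper these continuity hypotheses are simply carried over (implicitly) from Theorem \ref{thm-K-separable} rather than derived, so your attempted derivations should be dropped rather than repaired.
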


\begin{proof}
Note that $\calm$ is also a finite dimensional $(G,K)$-submanifold of class $C^2$. Let $\phi : V \to U \cap \calm$ be a local parametrization of $\calm$, and let $j \in \bbn$ be arbitrary. According to the decomposition (3.2) from \cite[Prop. 3.25]{BT-preprint} we have
\begin{align*}
D A^j(y) A^j(y) = D \phi(x) ( D a^j(x) a^j(x) ) + D^2 \phi(x) ( a^j(x), a^j(x) ), \quad y \in U \cap \calm,
\end{align*}
where $x := \phi^{-1}(y) \in V$, and the mapping $a^j \in C^1(V;\bbr^m)$ with $m := \dim \calm$ is given by
\begin{align*}
a^j(x) := D \phi(x)^{-1} A^j(y), \quad x \in V,
\end{align*}
where $y := \phi(x) \in U \cap \calm$. Thus, recalling that local representatives of $[A,B]_{\calm}$ for two vector fields $A,B \in \Gamma(T \calm)$ are given by (\ref{bracket}), applying Theorem \ref{thm-K-separable} concludes the proof.
\end{proof}

\section{The general situation}\label{sec-general}

In this section we investigate invariance of finite dimensional submanifolds in the general situation. Let $G,K$ be Banach spaces and let $H$ be a separable Hilbert space such that $(G,H,K)$ is a triplet of continuously embedded spaces. Furthermore, let $L : G \to K$ and $A : G \to \ell^2(H)$ be measurable mappings. Note that this covers the situation with a Gelfand triplet $(G,H,K) = (V,H,V^*)$ for some reflexive Banach space $V$; see, for example \cite{Liu-Roeckner}.

\begin{lemma}\label{lemma-L-restr}
Let $\calm \subset G$ be a subset such that $L(\calm) \subset H$. Then the restriction $L|_{\calm} : \calm \to H$ is $\calb(\calm)$-$\calb(H)$-measurable.
\end{lemma}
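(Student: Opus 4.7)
The plan is to reduce the claim to a Borel--measurability statement about the continuous embedding $j : H \hookrightarrow K$. By hypothesis $L : G \to K$ is $\calb(G)$-$\calb(K)$-measurable, and $\calm$ carries the relative topology from $G$, so $\calb(\calm) = \{ A \cap \calm : A \in \calb(G) \}$. For any $B \in \calb(H)$ I would use the identity
\begin{align*}
L|_{\calm}^{-1}(B) = L^{-1}(j(B)) \cap \calm,
\end{align*}
from which it suffices to prove that $j(B) \in \calb(K)$ for every $B \in \calb(H)$; the right-hand side then lies in $\calb(\calm)$ by measurability of $L$ into $K$.

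The main step is therefore the descriptive-set-theoretic fact that the continuous injection $j$ carries Borel subsets of $H$ into Borel subsets of $K$. Since $H$ is a separable Hilbert space it is Polish, and the image $j(H) \subset K$ is separable; one can thus choose a closed (hence Polish) separable linear subspace $K_0 \subset K$ with $j(H) \subset K_0$, for instance $K_0$ equal to the closure of $j(H)$ in $K$. Viewed as a map $j : H \to K_0$, this is a continuous injection between Polish spaces, so Kuratowski's theorem (equivalently, the Lusin--Souslin theorem) yields $j(B) \in \calb(K_0)$ for every $B \in \calb(H)$. Since $K_0$ is closed, hence Borel, in $K$, we have $\calb(K_0) \subset \calb(K)$, and this gives $j(B) \in \calb(K)$ as required.

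The only non-trivial ingredient is the Lusin--Souslin / Kuratowski step; everything else is routine bookkeeping with pre-images and traces of $\sigma$-algebras. I would expect the paper either to invoke this descriptive-set-theoretic fact directly or to cite a corresponding lemma from Appendix~\ref{app-embedding} encapsulating the inclusion $\calb(H) \subset \{ B \cap H : B \in \calb(K) \}$ in the setting of continuously embedded Banach spaces, at which point the conclusion is an immediate consequence of the measurability of $L : G \to K$.
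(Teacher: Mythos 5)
Your proposal is correct and follows essentially the same route as the paper: the paper's proof rests on its Appendix Lemma \ref{lemma-Kuratowski} (a consequence of Kuratowski's theorem, i.e.\ the Lusin--Souslin fact you invoke), which gives $H \in \calb(K)$ and $\calb(H) = \calb(K)_H$, so that every $B \in \calb(H)$ is Borel in $K$, and then concludes exactly as you do by taking preimages under the $\calb(G)$-$\calb(K)$-measurable map $L$ and intersecting with $\calm$. Your explicit passage to the closed separable subspace $K_0 = \overline{j(H)}$ is a careful way of handling possible non-separability of $K$, but it is the same underlying argument.
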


\begin{proof}
By Lemma \ref{lemma-Kuratowski} we have $H \in \calb(K)$ and $\calb(H) = \calb(K)_H$. Therefore, for each $B \in \calb(K)$ we have $B \cap H \in \calb(K)$, and hence
\begin{align*}
L|_{\calm}^{-1}(B \cap H) = L^{-1}(B \cap H) \cap \calm \in \calb(G)_{\calm} = \calb(\calm),
\end{align*}
which completes the proof.
\end{proof}

\begin{proposition}\label{prop-solutions-G-H-1}
Let $\calm \subset G$ be a subset such that $L(\calm) \subset H$ and $L|_{\calm} : (\calm,\| \cdot \|_G) \to (H,\| \cdot \|_H)$ is continuous. Let $y_0 \in \calm$ be arbitrary, and let $Y$ be a local martingale solution to the $(G,H,K)$-variational SPDE (\ref{SPDE}) with lifetime $\tau$ and $Y_0 = y_0$ such that $Y^{\tau} \in \calm$ and the sample paths of $Y^{\tau}$ are continuous with respect to $\| \cdot \|_G$. Then $Y$ is also a local martingale solution to the $(G,H)$-embedded SPDE (\ref{SPDE}) with lifetime $\tau$ and $Y_0 = y_0$.
\end{proposition}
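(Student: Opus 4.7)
The plan is to verify the two defining conditions (\ref{SPDE-int-cond}) and (\ref{SPDE-integral-form}) of Definition \ref{def-martingale-solution} for the process $Y$ viewed as a candidate local martingale solution to the $(G,H)$-embedded SPDE. The stochastic basis, the Wiener process $W$, and the lifetime $\tau$ are carried over unchanged from the given variational solution, and predictability of $Y$ is already part of the hypothesis; measurability of $L|_\calm$ as a map into $H$ is supplied by Lemma \ref{lemma-L-restr}, so that $L(Y_s)$ is a bona fide $H$-valued integrand on $\IL 0, \tau \IR$.

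First I would handle the integrability condition (\ref{SPDE-int-cond}). Since $Y^\tau$ takes values in $\calm$ with sample paths continuous in $(\calm, \| \cdot \|_G)$ and since $L|_\calm : (\calm, \| \cdot \|_G) \to (H, \| \cdot \|_H)$ is continuous by hypothesis, the composition $s \mapsto L(Y_s^\tau)$ has sample paths that are continuous in $H$. Consequently $s \mapsto \| L(Y_s^\tau) \|_H$ is locally bounded $\bbp$-almost surely, which yields $\int_0^{t \wedge \tau} \| L(Y_s) \|_H ds < \infty$ for every $t \in \bbr_+$; combined with the $\ell^2(H)$-integrability already contained in (\ref{int-cond-var}), this establishes (\ref{SPDE-int-cond}).

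Next, for the integral equation (\ref{SPDE-integral-form}), the key step is the pathwise identity
\begin{align*}
\text{($K$-)} \int_0^{t \wedge \tau} L(Y_s) ds = \text{($H$-)} \int_0^{t \wedge \tau} L(Y_s) ds.
\end{align*}
The continuous embedding $H \hookrightarrow K$ is a bounded linear operator and therefore commutes with Bochner integration whenever the integrand is Bochner integrable as an $H$-valued path, which was just verified. This is exactly the type of statement assembled in Appendix \ref{app-embedding} on continuously embedded spaces, and it is entirely analogous to the step used in the proof of Proposition \ref{prop-solutions-1} to produce (\ref{H-K-int-coincides}). The stochastic integral in (\ref{SPDE-integral-form-var}) is already formulated as an $H$-valued It\^o integral and requires no adjustment, so substituting the identified Bochner integrals into (\ref{SPDE-integral-form-var}) yields (\ref{SPDE-integral-form}).

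The only mildly technical point is the coincidence of the Bochner integrals in $H$ and $K$, and even this is routine once the $G$-continuity of $Y^\tau$ has been pushed through $L|_\calm$ to give an $H$-continuous, hence locally Bochner integrable, $H$-valued path. The essential content of the proposition is therefore the hypothesis that $L|_\calm$ takes values in and is continuous into $H$, which is precisely what allows the drift to be integrated in the finer space $H$ rather than merely in $K$.
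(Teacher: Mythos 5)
Your proposal is correct and follows essentially the same route as the paper: continuity of $L|_{\calm}$ in $\| \cdot \|_G$ composed with the $G$-continuous paths of $Y^{\tau}$ gives $H$-continuous, hence locally Bochner integrable, drift paths, so that (\ref{int-cond-var}) upgrades to (\ref{SPDE-int-cond}), and the coincidence of the $H$- and $K$-valued Bochner integrals (the paper's Lemma \ref{lemma-Leb-int-G-H}, which is the appendix result you allude to) turns (\ref{SPDE-integral-form-var}) into (\ref{SPDE-integral-form}). The only cosmetic difference is your explicit appeal to Lemma \ref{lemma-L-restr} for measurability, which the paper reserves for Proposition \ref{prop-solutions-G-H-2} but which does no harm here.
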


\begin{proof}
Since $L|_{\calm} : (\calm,\| \cdot \|_G) \to (H,\| \cdot \|_H)$ is continuous and the sample paths of $Y^{\tau}$ are continuous with respect to $\| \cdot \|_G$, it follows that the sample paths of the composition $L(Y^{\tau})$ are continuous with respect to $\| \cdot \|_H$. Therefore, we have $\bbp$-almost surely
\begin{align}\label{L-H-integrable}
\int_0^{t \wedge \tau} \| L(Y_s) \|_H ds < \infty, \quad t \in \bbr_+,
\end{align}
and hence condition (\ref{int-cond-var}) implies (\ref{SPDE-int-cond}). Furthermore, taking into account Lemma \ref{lemma-Leb-int-G-H} we have $\bbp$-almost surely
\begin{align}\label{H-K-int-Bochner-coincides}
\text{{\rm ($H$-)}} \int_0^{t \wedge \tau} L(Y_s) ds = \text{{\rm ($K$-)}} \int_0^{t \wedge \tau} L(Y_s) ds, \quad t \in \bbr_+.
\end{align}
Hence, the identity (\ref{SPDE-integral-form-var}) implies that $\bbp$-almost surely
\begin{align*}
Y_{t \wedge \tau} &= y_0 + \text{{\rm ($K$-)}} \int_0^{t \wedge \tau} L(Y_s) ds + \text{{\rm ($H$-)}} \int_0^{t \wedge \tau} A(Y_s) dW_s
\\ &= y_0 + \text{{\rm ($H$-)}} \int_0^{t \wedge \tau} L(Y_s) ds + \text{{\rm ($H$-)}} \int_0^{t \wedge \tau} A(Y_s) dW_s, \quad t \in \bbr_+,
\end{align*}
showing that $Y$ is a local martingale solution to the $(G,H)$-embedded SPDE (\ref{SPDE}) with lifetime $\tau$ and $Y_0 = y_0$.
\end{proof}

\begin{proposition}\label{prop-solutions-G-H-2}
Let $\calm \subset G$ be a subset such that $L(\calm) \subset H$. Let $y_0 \in \calm$ be arbitrary, and let $Y$ be a local martingale solution to the $(G,H)$-embedded SPDE (\ref{SPDE}) with lifetime $\tau$ and $Y_0 = y_0$ such that $Y^{\tau} \in \calm$. Then $Y$ is also a local martingale solution to the $(G,H,K)$-variational SPDE (\ref{SPDE}) with lifetime $\tau$ and $Y_0 = y_0$.
\end{proposition}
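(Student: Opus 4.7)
The plan is to mirror the strategy of Proposition \ref{prop-solutions-G-H-1} in the opposite direction: starting from the $(G,H)$-embedded integral identity (\ref{SPDE-integral-form}), I would reinterpret the $H$-valued Bochner drift integral as a $K$-valued Bochner integral, and then verify the remaining conditions of Definition \ref{def-martingale-solution-var}. No continuity of $L|_\calm$ needs to be assumed, because the relevant integrability of $L(Y_\cdot)$ in $H$ is already built into the hypothesis that $Y$ is an embedded solution.

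First, since $Y^\tau \in \calm$ and $L(\calm) \subset H$, we have $L(Y_s) \in H$ for every $s \in [0,t\wedge\tau]$. By the continuous embedding $H \hookrightarrow K$ there is a constant $C > 0$ with $\|\cdot\|_K \le C \|\cdot\|_H$ on $H$, so condition (\ref{SPDE-int-cond}) immediately implies the $K$-part of (\ref{int-cond-var}), while the $\ell^2(H)$-part is inherited verbatim. For the separability requirement in Definition \ref{def-martingale-solution-var}, I would observe that for each $\omega \in \Omega$ the image $\{L(Y_t^\tau(\omega)) : t \in \bbr_+\}$ is contained in $L(\calm) \subset H$; because the separable metric space $H$ is second countable and second countability is hereditary, this image is separable in $(H,\|\cdot\|_H)$, and by continuity of the inclusion $H \hookrightarrow K$ it remains separable as a subset of $(K,\|\cdot\|_K)$.

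Finally, applying Lemma \ref{lemma-Leb-int-G-H} to the continuously embedded pair $(H,K)$ yields
\[
\text{($H$-)}\int_0^{t\wedge\tau} L(Y_s)\,ds \;=\; \text{($K$-)}\int_0^{t\wedge\tau} L(Y_s)\,ds \qquad \bbp\text{-a.s., for every } t \in \bbr_+,
\]
and substituting this identity into the embedded integral equation (\ref{SPDE-integral-form}) produces the variational integral equation (\ref{SPDE-integral-form-var}). I do not foresee any serious obstacle: the only asymmetry with Proposition \ref{prop-solutions-G-H-1} is that one does not need any $\|\cdot\|_G$-continuity of $L|_\calm$ into $H$ here, since we begin from a solution for which $L(Y_\cdot)$ is already $H$-integrable on $[0,t\wedge\tau]$. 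Predictability of $Y$ as a $G$-valued process and measurability of $L : G \to K$ are part of the hypotheses, so no additional measurability argument is required.
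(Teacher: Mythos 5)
Your overall route is the same as the paper's: convert the ($H$-) Bochner drift integral into a ($K$-) one via Lemma \ref{lemma-Leb-int-G-H}, get the $K$-integrability of $L(Y_\cdot)$ from (\ref{SPDE-int-cond}) and the embedding inequality (\ref{embedding-inequ}), and check the separability requirement of Definition \ref{def-martingale-solution-var}. The one point you dismiss too quickly is your closing claim that ``no additional measurability argument is required.'' Lemma \ref{lemma-Leb-int-G-H} is stated for an $\cale$-$\calb(H)$-measurable map $f$ with values in $H$, whereas your hypotheses directly give only $\calb(K)$-measurability, since $L$ is assumed measurable as a map into $K$. To apply the lemma pathwise to $f(s)=L(Y^{\tau(\omega)}_s(\omega))$ you must know that this map is $\calb(\bbr_+)$-$\calb(H)$-measurable, and this is precisely what the paper spends the first half of its proof establishing: $Y^\tau$ is predictable, hence its sample paths are $\calb(\bbr_+)$-$\calb(\calm)$-measurable, and by Lemma \ref{lemma-L-restr} (which rests on Kuratowski's theorem, Lemma \ref{lemma-Kuratowski}, giving $H\in\calb(K)$ and $\calb(H)=\calb(K)_H$) the restriction $L|_{\calm}:\calm\to H$ is $\calb(\calm)$-$\calb(H)$-measurable, so the composition is as well. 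One could try to argue that this measurability is implicit in the existence of the ($H$-) integral in (\ref{SPDE-integral-form}), but that is itself an argument that has to be made; as written, the hypotheses of Lemma \ref{lemma-Leb-int-G-H} are not verified in your proposal.

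The remaining steps are fine and match the paper: separability of the image in $(H,\|\cdot\|_H)$ is automatic since $H$ is separable, and separability in $(K,\|\cdot\|_K)$, as required by Definition \ref{def-martingale-solution-var}, follows from the continuous embedding (or from part (1) of Lemma \ref{lemma-Leb-int-G-H}); likewise (\ref{SPDE-int-cond}) together with the $K$-integrability gives (\ref{int-cond-var}), and substituting the identity of the two Bochner integrals into (\ref{SPDE-integral-form}) yields (\ref{SPDE-integral-form-var}). You are also right that, in contrast to Proposition \ref{prop-solutions-G-H-1}, no continuity of $L|_{\calm}$ is needed here.
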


\begin{proof}
Since $Y$ is predictable, the stopped process $Y^{\tau}$ is also predictable, and thus $\calf \otimes \calb(\bbr_+)$-measurable. Hence, for each $\omega \in \Omega$ the sample path $t \mapsto Y_t^{\tau(\omega)}(\omega)$ is $\calb(\bbr_+)$-$\calb(\calm)$-measurable. Therefore, by Lemma \ref{lemma-L-restr} for each $\omega \in \Omega$ the mapping $t \mapsto L(Y_t^{\tau(\omega)}(\omega))$ is $\calb(\bbr_+)$-$\calb(H)$-measurable. Taking into account condition (\ref{SPDE-int-cond}), by Lemma \ref{lemma-Leb-int-G-H} we obtain $\bbp$-almost surely
\begin{align}\label{L-int}
\int_0^{t \wedge \tau} \| L(Y_s) \|_K ds < \infty, \quad t \in \bbr_+
\end{align}
as well as (\ref{H-K-int-Bochner-coincides}). Now, condition (\ref{SPDE-int-cond}) and (\ref{L-int}) imply condition (\ref{int-cond-var}), and the identity (\ref{SPDE-integral-form}) implies that $\bbp$-almost surely
\begin{align*}
Y_{t \wedge \tau} &= y_0 + \text{{\rm ($H$-)}} \int_0^{t \wedge \tau} L(Y_s) ds + \text{{\rm ($H$-)}} \int_0^{t \wedge \tau} A(Y_s) dW_s
\\ &= y_0 + \text{{\rm ($K$-)}} \int_0^{t \wedge \tau} L(Y_s) ds + \text{{\rm ($H$-)}} \int_0^{t \wedge \tau} A(Y_s) dW_s, \quad t \in \bbr_+,
\end{align*}
showing that $Y$ is a local martingale solution to the $(G,H,K)$-variational SPDE (\ref{SPDE}) with lifetime $\tau$ and $Y_0 = y_0$.
\end{proof}

\begin{theorem}\label{thm-general}
Let $\calm$ be a finite dimensional $(G,K)$-submanifold of class $C^2$. Suppose that $L(\calm) \subset H$ and that $L|_{\calm} : (\calm,\| \cdot \|_G) \to (H,\| \cdot \|_H)$ and $A|_{\calm} : (\calm,\| \cdot \|_G) \to \ell^2(H)$ are continuous. Then the following statements are equivalent:
\begin{enumerate}
\item[(i)] The submanifold $\calm$ is locally invariant for the $(G,H)$-embedded SPDE (\ref{SPDE}).

\item[(ii)] The submanifold $\calm$ is locally invariant for the $(G,H,K)$-variational SPDE (\ref{SPDE}).

\item[(iii)] We have (\ref{tang-A}) and (\ref{tang-L}).
\end{enumerate}
In particular, if the equivalent conditions (i)--(iii) are fulfilled, then for each $y_0 \in \calm$ there exists a local martingale solution $Y$ to the $(G,H)$-embedded SPDE (\ref{SPDE}) and to the $(G,H,K)$-variational SPDE (\ref{SPDE}) with $Y_0 = y_0$.
\end{theorem}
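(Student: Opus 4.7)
The plan is to mirror the proof of Theorem \ref{thm-K-separable}, using Propositions \ref{prop-solutions-G-H-1} and \ref{prop-solutions-G-H-2} to link the two notions of solution (yielding (i) $\Leftrightarrow$ (ii)) and Theorem \ref{thm-SPDE} to obtain (i) $\Leftrightarrow$ (iii).

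First I would observe that $\calm$ is automatically a finite dimensional $(G,H)$-submanifold of class $C^2$. Since $(G,H,K)$ is a triplet of continuously embedded spaces, the relative topologies satisfy $\tau_G \cap \calm \supseteq \tau_H \cap \calm \supseteq \tau_K \cap \calm$. The $(G,K)$-submanifold assumption gives $\tau_G \cap \calm = \tau_K \cap \calm$, so all three relative topologies coincide on $\calm$. A $C^2$ local parametrization $\phi : V \to U \cap \calm$ with values in $G$ composes with $G \hookrightarrow H$ to yield a $C^2$ map into $H$ whose differential remains injective, so $\calm$ is a $C^2$-submanifold of $H$ as well, and the topology coincidence above shows $\tau_G \cap \calm = \tau_H \cap \calm$. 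This confirms the $(G,H)$-submanifold property.

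Next, for (i) $\Rightarrow$ (ii): given $y_0 \in \calm$ and a local martingale solution $Y$ to the $(G,H)$-embedded SPDE with $Y^{\tau} \in \calm$ up to evanescence, the assumption $L(\calm) \subset H$ lets us apply Proposition \ref{prop-solutions-G-H-2} directly, producing a local martingale solution to the $(G,H,K)$-variational SPDE with the same lifetime. For (ii) $\Rightarrow$ (i): take a local martingale solution $Y$ to the variational SPDE with $Y^{\tau} \in \calm$. By Remark \ref{rem-cont-var} the sample paths of $Y^{\tau}$ are $\|\cdot\|_K$-continuous; since $\calm$ is a $(G,K)$-submanifold, Proposition \ref{prop-embedding} (applied with the pair $(G,K)$) upgrades this to $\|\cdot\|_G$-continuity. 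Proposition \ref{prop-solutions-G-H-1} then yields that $Y$ solves the $(G,H)$-embedded SPDE, giving (i).

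Finally, for (i) $\Leftrightarrow$ (iii) I would apply Theorem \ref{thm-SPDE} to the $(G,H)$-embedded SPDE: the continuity hypotheses of that theorem are exactly the continuity of $L|_\calm$ into $H$ and of $A|_\calm$ into $\ell^2(H)$ assumed here, and we have just checked that $\calm$ is a $C^2$ $(G,H)$-submanifold. The additional ``in particular'' statement follows because the equivalence argument produces solutions of both types from the tangency conditions. The only non-routine step is the topological verification that the $(G,K)$-submanifold structure automatically upgrades to a $(G,H)$-submanifold structure; everything else is essentially a substitution into the previously established propositions.
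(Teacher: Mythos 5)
Your overall architecture coincides with the paper's: (i) $\Rightarrow$ (ii) via Proposition \ref{prop-solutions-G-H-2}, (ii) $\Rightarrow$ (i) via Remark \ref{rem-cont-var}, Proposition \ref{prop-embedding} and Proposition \ref{prop-solutions-G-H-1}, and (i) $\Leftrightarrow$ (iii) via Theorem \ref{thm-SPDE} applied to the pair $(G,H)$. The gap sits exactly in the step you yourself flag as the only non-routine one: upgrading the $(G,K)$-submanifold $\calm$ to a $(G,H)$-submanifold of class $C^2$. Your topological observation is fine: from $\tau_G \cap \calm \supseteq \tau_H \cap \calm \supseteq \tau_K \cap \calm$ and the assumed equality of the outer two, all three relative topologies on $\calm$ coincide. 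But the differentiability part of your argument does not work as written. The $(G,K)$-submanifold structure provides local parametrizations $\phi : V \to U \cap \calm$ that are $C^2$ as maps into $(K,\| \cdot \|_K)$ only; the coincidence of topologies gives continuity of $\phi$ into $(G,\| \cdot \|_G)$, not differentiability. Hence there is no ``$C^2$ local parametrization with values in $G$'' to compose with the embedding $G \hookrightarrow H$, and $C^2$-smoothness with respect to the stronger norm $\| \cdot \|_H$ is a genuinely stronger property that such a composition cannot deliver. Translation curves illustrate the obstruction: $t \mapsto g(\cdot - t)$ is $C^2$ into $L^2$ for $g \in H^2$ but fails to be $C^2$ into $H^1$ unless $g \in H^3$; likewise, in the paper's own Hermite--Sobolev examples of Section \ref{sec-HS} the parametrizations $x \mapsto \tau_x \Phi$ with $\Phi \in \cals_{p+1}(\bbr^d)$ lose regularity with each differentiation and in general are not $C^2$ into $G$.

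The paper does not argue this step the way you do: it obtains the $(G,H)$-submanifold property from Proposition \ref{prop-embedding} together with the continuity of the embedding $\Id : (H,\| \cdot \|_H) \to (K,\| \cdot \|_K)$, i.e., from the submanifold machinery of \cite{BT-preprint}, rather than by differentiating a parametrization in a stronger norm. To repair your write-up, replace the composition argument by an appeal to that proposition (your topology-coincidence computation is precisely the continuity input it requires), or else supply an actual proof that the $C^2$-structure of $\calm$ in $K$ induces a $C^2$-structure in $H$ under the standing hypotheses; as it stands, this is the one point where your proof would fail, while the remaining steps are correct and identical to the paper's.
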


\begin{proof}
(i) $\Rightarrow$ (ii): This is a consequence of Proposition \ref{prop-solutions-G-H-2}.

\noindent(ii) $\Rightarrow$ (i): Let $y_0 \in \calm$ be arbitrary. Then there exists a local martingale solution $Y$ to the $(G,H,K)$-variational SPDE (\ref{SPDE}) with $Y_0 = y_0$ and lifetime $\tau$ such that $Y^{\tau} \in \calm$ up to an evanescent set. By Remark \ref{rem-cont-var} the sample paths of $Y^{\tau}$ are continuous with respect to $\| \cdot \|_K$, and hence, by Proposition \ref{prop-embedding} it follows that the sample paths of $Y^{\tau}$ are even continuous with respect to $\| \cdot \|_G$. Therefore, by Proposition \ref{prop-solutions-G-H-1} the process $Y$ is also a local martingale solution to the $(G,H)$-embedded SPDE (\ref{SPDE}) with lifetime $\tau$ and $Y_0 = y_0$.

\noindent(i) $\Leftrightarrow$ (iii): Since $(H,K)$ are continuously embedded spaces, the embedding operator $\Id : (H,\| \cdot \|_H) \to (K,\| \cdot \|_K)$ is continuous, and hence, by Proposition \ref{prop-embedding} the submanifold $\calm$ is also a $(G,H)$-submanifold of class $C^2$. Therefore, the stated equivalence is a consequence of Theorem \ref{thm-SPDE}.
\end{proof}

\section{Invariant submanifolds in Hermite Sobolev spaces}\label{sec-HS}

In this section we present our first application. Namely, we use our findings from Section \ref{sec-K-separable} in order to construct examples of invariant submanifolds in Hermite Sobolev spaces; see \cite[App. A]{BT-preprint} for the required background about Hermite Sobolev spaces. Let $p \in \bbr$ be arbitrary and define the Hermite Sobolev spaces
\begin{align*}
G := \cals_{p+1}(\bbr^d), \quad H := \cals_{p+\frac{1}{2}}(\bbr^d) \quad \text{and} \quad K := \cals_p(\bbr^d).
\end{align*}
Then $(G,H,K)$ is a triplet of continuously embedded separable Hilbert spaces, and hence we are in the framework of Section \ref{sec-K-separable}. Let
\begin{align*}
b \in \cals_{-(p+1)}(\bbr^d;\bbr^d) \quad \text{and} \quad \sigma \in \ell^2(\cals_{-(p+1)}(\bbr^d;\bbr^d))
\end{align*}
be given mappings. We define the coefficients $L : G \to K$ and $A^j : G \to H$ for $j \in \bbn$ as
\begin{align*}
L(y) &:= \frac{1}{2} \sum_{i,j=1}^d ( \langle \sigma,y \rangle \langle \sigma,y \rangle^{\top} )_{ij} \partial_{ij}^2 y - \sum_{i=1}^d \langle b_i,y \rangle \partial_i y,
\\ A^j(y) &:= - \sum_{i=1}^d \langle \sigma_{i}^j,y \rangle \partial_i y, \quad j \in \bbn.
\end{align*}
This provides well-defined continuous mappings $L : G \to K$ and $A : G \to \ell^2(H)$ for the SPDE (\ref{SPDE}); see \cite[Sec. 5.3]{BT} for further details. Such SPDEs were recently studied in \cite{Rajeev, Rajeev-2019}. We will call them It\^{o} type SPDEs due to their connection to finite dimensional SDEs; see \cite[Sec. 6]{BT}. As an immediate consequence of Theorem \ref{thm-K-separable} we obtain the following result.

\begin{proposition}\label{prop-K-separable}
Let $\calm$ be a finite dimensional $(G,K)$-submanifold of class $C^2$. Then the following statements are equivalent:
\begin{enumerate}
\item[(i)] The submanifold $\calm$ is locally invariant for the $(G,K)$-embedded It\^{o} type SPDE (\ref{SPDE}).

\item[(ii)] The submanifold $\calm$ is locally invariant for the $(G,H,K)$-variational It\^{o} type SPDE (\ref{SPDE}).
\end{enumerate}
\end{proposition}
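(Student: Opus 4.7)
The plan is to verify that the hypotheses of Theorem \ref{thm-K-separable} are satisfied in this setting and then invoke that theorem to obtain the equivalence (i) $\Leftrightarrow$ (ii). Since Theorem \ref{thm-K-separable} actually gives a three-way equivalence including a tangential condition on $L$ and $A$, the desired two-way equivalence for our It\^{o} type SPDE follows at once.

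First I would recall from the setup that $(G,H,K) = (\cals_{p+1}(\bbr^d), \cals_{p+\frac{1}{2}}(\bbr^d), \cals_p(\bbr^d))$ is a triplet of continuously embedded separable Hilbert spaces, so that we are in the framework of Section \ref{sec-K-separable}. Next I would note, referring to \cite[Sec. 5.3]{BT}, that the explicit formulas defining the drift $L$ and the diffusion coefficients $\{ A^j \}_{j \in \bbn}$ out of $b \in \cals_{-(p+1)}(\bbr^d;\bbr^d)$ and $\sigma \in \ell^2(\cals_{-(p+1)}(\bbr^d;\bbr^d))$ yield well-defined \emph{continuous} mappings $L : G \to K$ and $A : G \to \ell^2(H)$ on the whole of $G$. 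Since $\calm \subset G$ carries the relative topology induced by $\| \cdot \|_G$, the restrictions $L|_{\calm} : (\calm,\| \cdot \|_G) \to K$ and $A|_{\calm} : (\calm,\| \cdot \|_G) \to \ell^2(H)$ are automatically continuous.

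With the hypothesis that $\calm$ is a finite dimensional $(G,K)$-submanifold of class $C^2$ given by assumption, all the assumptions of Theorem \ref{thm-K-separable} are met. Applying that theorem yields the equivalence of (i) local invariance for the $(G,K)$-embedded SPDE and (ii) local invariance for the $(G,H,K)$-variational SPDE (both being in addition equivalent to the tangential conditions (\ref{tang-A}) and (\ref{tang-L})), which is exactly the claim.

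There is no real obstacle here, as the proposition is stated as an immediate consequence of Theorem \ref{thm-K-separable}; the only substantive ingredient is the continuity of $L$ and $A$ on $G$, which is supplied by the references \cite{Rajeev, Rajeev-2019} and \cite[Sec. 5.3]{BT}. The proof is therefore a short verification-and-application argument.
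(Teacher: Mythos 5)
Your proposal is correct and follows exactly the paper's route: the paper presents Proposition \ref{prop-K-separable} as an immediate consequence of Theorem \ref{thm-K-separable}, relying on the continuity of $L : G \to K$ and $A : G \to \ell^2(H)$ established via \cite[Sec. 5.3]{BT}, which makes the restrictions to $\calm$ continuous and the hypotheses of that theorem satisfied. Nothing further is needed.
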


In \cite[Sec. 5.3]{BT} there are several results and examples concerning locally invariant submanifolds for the $(G,K)$-embedded It\^{o} type SPDE (\ref{SPDE}); in particular for submanifolds generated by the translation group $(\tau_x)_{x \in \bbr^d}$, which are of the form
\begin{align*}
\calm = \{ \tau_x \Phi : x \in \caln \}
\end{align*}
for some $\Phi \in G$ and a $C^2$-submanifold $\caln$ of $\bbr^d$. As a consequence of Proposition \ref{prop-K-separable}, all the aforementioned findings from \cite[Sec. 5.3]{BT} transfer to the $(G,H,K)$-variational It\^{o} type SPDE (\ref{SPDE}).

\section{Linear submanifolds for the stochastic $p$-Laplace equation}\label{sec-Laplace}

In this section we present our second application. Namely, we use our findings from Section \ref{sec-general} in order to characterize linear submanifolds for the stochastic $p$-Laplace equation (cf. \cite[Example 1.2.6]{Liu-Roeckner})
\begin{align}\label{SPDE-Laplace}
\left\{
\begin{array}{rcl}
dY_t & = & {\rm div} \, \big( | \nabla Y_t |^{p-2} \nabla Y_t \big) dt + A(Y_t) dW_t
\\ Y_0 & = & y_0.
\end{array}
\right.
\end{align}
Let us start with a more general situation. Namely, consider a Gelfand triplet $(V,H,V^*)$. We assume that the drift $L : V \to V^*$ in the SPDE (\ref{SPDE}) is a continuous linear operator $L \in L(V,V^*)$. Furthermore, let $A : V \to \ell^2(H)$ be a measurable mapping. Let $v_1,\ldots,v_m \in V$ be eigenvectors or elements from the kernel of $L$ for some $m \in \bbn$; that is, we have
\begin{align*}
L v_i = \lambda_i v_i, \quad i=1,\ldots,m
\end{align*}
with $\lambda_1,\ldots,\lambda_m \in \bbr$. We consider the linear submanifold
\begin{align*}
\calm := \lin \{ v_1,\ldots,v_m \}.
\end{align*}

\begin{proposition}\label{prop-linear-manifold}
Suppose that $A|_{\calm} : (\calm,\| \cdot \|_G) \to \ell^2(H)$ is continuous. Then the following statements are equivalent:
\begin{enumerate}
\item[(i)] The submanifold $\calm$ is locally invariant for the $(V,H)$-embedded SPDE (\ref{SPDE}).

\item[(ii)] The submanifold $\calm$ is locally invariant for the $(V,H,V^*)$-variational SPDE (\ref{SPDE}).

\item[(iii)] We have (\ref{tang-A}).
\end{enumerate}
\end{proposition}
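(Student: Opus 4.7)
The plan is to apply Theorem \ref{thm-general} with $G := V$ and $K := V^*$, and then show that the curvature-type condition (\ref{tang-L}) is automatically satisfied for a linear submanifold whose generators are eigenvectors (or kernel elements) of $L$, so that (\ref{tang-A}) alone suffices.

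First I verify the hypotheses of Theorem \ref{thm-general}. Since $\calm$ is a finite-dimensional linear subspace of $V$, all Hausdorff vector topologies on $\calm$ coincide; in particular, the topologies induced by $\|\cdot\|_V$, $\|\cdot\|_H$ and $\|\cdot\|_{V^*}$ agree on $\calm$, so by Proposition \ref{prop-embedding} the subspace $\calm$ is a finite-dimensional $(V, V^*)$-submanifold of class $C^\infty$ (in particular of class $C^2$), with $T_y \calm = \calm$ at every $y \in \calm$. The eigenvector property yields
\begin{align*}
L(\calm) = L\big( \lin\{v_1,\ldots,v_m\} \big) = \lin\{\lambda_1 v_1,\ldots,\lambda_m v_m\} \subset \calm \subset V \subset H,
\end{align*}
so $L(\calm) \subset H$. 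The restriction $L|_\calm$ is a linear map into the finite-dimensional space $\calm$, hence continuous as a map $(\calm,\|\cdot\|_V) \to (H,\|\cdot\|_H)$, and the continuity of $A|_\calm$ is the standing assumption. Thus Theorem \ref{thm-general} applies and delivers the equivalence of (i), (ii), and the conjunction of (\ref{tang-A}) and (\ref{tang-L}).

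It remains to show that (\ref{tang-L}) is automatic. To this end I use the global linear parametrization $\phi : \bbr^m \to \calm$, $\phi(x) := \sum_{i=1}^m x_i v_i$. Since $\phi$ is linear, $D\phi(x)$ is independent of $x$ and $D^2 \phi \equiv 0$, so by (\ref{bracket}) the local representative of $[A^j|_\calm, A^j|_\calm]_\calm$ on $U \cap \calm$ is identically zero, giving
\begin{align*}
\sum_{j=1}^\infty [A^j|_\calm, A^j|_\calm]_\calm = [0]_{\Gamma(T\calm)}
\end{align*}
in the quotient $A(\calm)/\Gamma(T\calm)$. Moreover, the eigenvector relations imply $L|_\calm(y) = Ly \in \calm = T_y\calm$ for every $y \in \calm$, so $L|_\calm \in \Gamma(T\calm)$ and $[L|_\calm]_{\Gamma(T\calm)} = [0]_{\Gamma(T\calm)}$. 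Hence (\ref{tang-L}) holds unconditionally, and the equivalence of (i), (ii) and (iii) follows.

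The only nontrivial bookkeeping is the identification of $\calm$ as a $(V,V^*)$-submanifold and the concomitant triviality of the bracket, both of which are forced by the fact that $\calm$ is genuinely linear in the ambient topologies; once these are in place the proposition is essentially an immediate specialization of Theorem \ref{thm-general}.
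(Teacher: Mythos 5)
Your proof is correct and follows essentially the same route as the paper: both reduce the proposition to Theorem \ref{thm-general}, after noting that $L(\calm)\subset\calm\subset V\subset H$ and that $L|_{\calm}$ is automatically continuous. The only difference is in how the conjunction of (\ref{tang-A}) and (\ref{tang-L}) is reduced to (\ref{tang-A}) alone: the paper outsources this to \cite[Cor. 3.9]{BT}, whereas you verify it directly — the linear chart has $D^2\phi\equiv 0$, so by (\ref{bracket}) each bracket $[A^j|_{\calm},A^j|_{\calm}]_{\calm}$ is the zero class, and the eigenvector/kernel relations give $L|_{\calm}\in\Gamma(T\calm)$, hence $[L|_{\calm}]_{\Gamma(T\calm)}=[0]_{\Gamma(T\calm)}$. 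This self-contained verification is a sound substitute for the citation. One cosmetic point: the $v_i$ need not be linearly independent, so $\phi(x)=\sum_{i=1}^m x_i v_i$ on $\bbr^m$ may fail to be a parametrization; one should first pass to a basis of $\calm$ (the argument is unchanged, since any linear chart has vanishing second derivative and $L(\calm)\subset\calm$ holds regardless).
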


\begin{proof}
Note that $L(\calm) \subset V$ and that $L|_{\calm} : \calm \to V$ is continuous, showing that the assumptions of Theorem \ref{thm-general} are more than satisfied. Hence, together with \cite[Cor. 3.9]{BT} this completes the proof.
\end{proof}

Now we consider the stochastic $p$-Laplace equation (\ref{SPDE-Laplace}). Let us fix some $p \in [2,\infty)$. The space $V$ is given by $V := H_0^{1,p}(\Lambda)$ with an open subset $\Lambda \subset \bbr^d$; the so-called Sobolev space of order $1$ in $L^p(\Lambda)$; see \cite[p. 78]{Liu-Roeckner}. The drift $L$ in the SPDE (\ref{SPDE}) is the $p$-Laplacian $L : V \to V^*$ given by
\begin{align*}
L(y) = {\rm div} \, \big( | \nabla y |^{p-2} \nabla y \big), \quad y \in V.
\end{align*}
The $p$-Laplacian $L : V \to V^*$ is indeed a continuous linear operator $L \in L(V,V^*)$; see equation (4.14) in \cite[p. 81]{Liu-Roeckner}. Therefore, Proposition \ref{prop-linear-manifold} applies to every finite dimensional submanifold $\calm$ generated by eigenvectors or elements from the kernel of $L$.

\begin{appendix}

\section{Continuously embedded spaces}\label{app-embedding}

In this appendix we provide the required auxiliary results about continuously embedded spaces.

\begin{definition}\label{def-embedding}
Let $H$ and $K$ be two normed spaces. Then we call $(H,K)$ \emph{continuously embedded normed spaces} (or \emph{normed spaces with continuous embedding}) if the following conditions are fulfilled:
\begin{enumerate}
\item We have $H \subset K$ as sets.

\item The embedding operator $\Id : (H, \| \cdot \|_H) \to (K, \| \cdot \|_K)$ is continuous; that is, there is a constant $C > 0$ such that
\begin{align}\label{embedding-inequ}
\| x \|_K \leq C \| x \|_H \quad \text{for all $x \in H$.}
\end{align}
\end{enumerate}
\end{definition}

\begin{lemma}\label{lemma-Kuratowski}
Let $(H,K)$ be two continuously embedded Banach spaces. Then we have $H \in \calb(K)$ and $\calb(H) = \calb(K)_H$, where $\calb(K)_H$ denotes the trace $\sigma$-algebra
\begin{align*}
\calb(K)_H = \{ B \cap H : B \in \calb(K) \}.
\end{align*}
\end{lemma}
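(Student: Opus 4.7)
The plan is to deduce both claims from the classical Kuratowski theorem, which states that a continuous injection between Polish spaces sends Borel sets to Borel sets (and in particular has Borel image). Both $H$ and $K$ are completely metrizable as Banach spaces; taking them to be separable---as is implicit in every application of the lemma in this paper (the Hilbert space $H$ is separable by assumption in Sections \ref{sec-SPDEs-embedded} and \ref{sec-SPDEs}, and one may always replace $K$ by the separable closed subspace $\overline{H}^{\|\cdot\|_K}$ without affecting any of the arguments in which the lemma is invoked)---both spaces are Polish. The embedding $\Id : (H,\|\cdot\|_H) \to (K,\|\cdot\|_K)$ is then a continuous injection between Polish spaces, which is exactly the setting of Kuratowski's theorem.

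A first application of Kuratowski's theorem to $\Id$ yields that $\Id(H) = H$ is Borel in $K$, giving $H \in \calb(K)$. For the identity $\calb(H) = \calb(K)_H$, I would establish the two inclusions separately. The inclusion $\calb(K)_H \subseteq \calb(H)$ is immediate from continuity of $\Id$: for every $B \in \calb(K)$, the preimage $\Id^{-1}(B) = B \cap H$ lies in $\calb(H)$. The reverse inclusion $\calb(H) \subseteq \calb(K)_H$ is where the full strength of Kuratowski is used: for every $A \in \calb(H)$, the image $\Id(A) = A$ is Borel in $K$, so $A \in \calb(K)$, and since $A \subseteq H$ we recover $A = A \cap H \in \calb(K)_H$.

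The main obstacle is the appeal to Kuratowski's theorem itself, which is a nontrivial result from descriptive set theory rather than a routine consequence of continuity. Without the Polish-space hypothesis---in particular, without separability---the reverse inclusion $\calb(H) \subseteq \calb(K)_H$ can genuinely fail, since continuous injections between general metric spaces need not preserve Borel sets. Once separability is in place, as in all situations arising in the paper, the argument is a clean two-line invocation of the standard theorem, with no estimates or approximations required.
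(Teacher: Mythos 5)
Your proposal is correct and follows essentially the same route as the paper, whose proof is a one-line citation of Kuratowski's theorem (Parthasarathy, Thm.\ I.3.9): the easy inclusion $\calb(K)_H \subseteq \calb(H)$ from continuity of the embedding, and the reverse inclusion together with $H \in \calb(K)$ from the Borel-image/Borel-isomorphism statement of that theorem. Your additional remark on separability (and the reduction to the closed subspace $\overline{H}^{\|\cdot\|_K}$, noting that a Borel subset of a closed subspace is Borel in $K$) is a sensible way to justify the hypothesis that the paper leaves implicit, but it does not change the argument.
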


\begin{proof}
This is a consequence of Kuratowski's theorem; see, for example \cite[Thm. I.3.9]{Parthasarathy}.
\end{proof}

\begin{lemma}\label{lemma-Leb-int-G-H}
Let $(H,K)$ be a pair of continuously embedded Banach spaces, and let $(E,\cale,\mu)$ be a finite measure space. Let $f : E \to H$ be an $\cale$-$\calb(H)$-measurable mapping such that the image $f(E)$ is a separable subset of $(H,\| \cdot \|_H)$ and
\begin{align}\label{f-H-integrable}
\int_E \| f \|_H \, d \mu < \infty.
\end{align}
Then the following statements are true:
\begin{enumerate}
\item The mapping $f$ is also $\cale$-$\calb(K)$-measurable, the image $f(E)$ is a separable subset of $(K,\| \cdot \|_K)$, and we have
\begin{align}\label{f-K-integrable}
\int_E \| f \|_K \, d \mu < \infty.
\end{align}
\item We have
\begin{align}\label{H-K-coincide}
\text{{\rm ($H$-)}} \int_E f \, d \mu = \text{{\rm ($K$-)}} \int_E f \, d \mu.
\end{align}
\end{enumerate}
\end{lemma}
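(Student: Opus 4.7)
The plan is to handle the two statements in order, with part (1) providing the strong measurability and integrability needed to make sense of the Bochner integral in $K$, and part (2) following by approximation with simple functions (or equivalently, by invoking that bounded linear operators commute with Bochner integration).

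For part (1), measurability of $f$ as an $\cale$-$\calb(K)$-measurable mapping is immediate from Lemma \ref{lemma-Kuratowski}: since $\calb(H) = \calb(K)_H$, for each $B \in \calb(K)$ we have $B \cap H \in \calb(H)$, hence $f^{-1}(B) = f^{-1}(B \cap H) \in \cale$. Separability of $f(E)$ in $(K, \| \cdot \|_K)$ is inherited from separability in $(H, \| \cdot \|_H)$: by the continuity inequality (\ref{embedding-inequ}), any countable $H$-dense subset of $f(E)$ is automatically $K$-dense. Integrability (\ref{f-K-integrable}) follows by applying (\ref{embedding-inequ}) pointwise to $f$ and integrating against $\mu$, using (\ref{f-H-integrable}).

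For part (2), the idea is to pick a sequence of $H$-valued simple functions $(s_n)_{n \in \bbn}$ with $\int_E \| f - s_n \|_H \, d\mu \to 0$, whose existence is guaranteed by Bochner's theorem now that the prerequisites are supplied by part (1). A simple function $s_n = \sum_{i=1}^{k_n} \bbI_{A_i^n} x_i^n$ with $x_i^n \in H$ may be regarded as a $K$-valued simple function without any change, and the corresponding Bochner integrals in the two spaces are given by the identical finite sum $\sum_{i=1}^{k_n} \mu(A_i^n) x_i^n$, which lies in $H \subset K$. The estimate (\ref{embedding-inequ}) then implies $\int_E \| f - s_n \|_K \, d\mu \to 0$, so this common integral value converges in $K$-norm to $\text{{\rm ($K$-)}} \int_E f \, d\mu$. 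On the other hand, it converges in $H$-norm, and hence, again by (\ref{embedding-inequ}), in $K$-norm, to $\text{{\rm ($H$-)}} \int_E f \, d\mu$. Uniqueness of limits in $(K, \| \cdot \|_K)$ delivers the identity (\ref{H-K-coincide}).

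No step is particularly delicate; the only point requiring some care is the simultaneous interpretation of the approximating simple functions in both $H$ and $K$ and the observation that their Bochner integrals are literally the same finite sum. This is essentially the general principle that bounded linear operators commute with Bochner integration, specialised to the continuous embedding $\Id : H \to K$, and one could alternatively cite that principle directly to obtain (\ref{H-K-coincide}) in a single line.
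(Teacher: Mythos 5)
Your proposal is correct and follows essentially the same route as the paper: part (1) via Kuratowski's lemma, the embedding inequality for separability and integrability, and part (2) by approximating $f$ with $H$-valued simple functions whose Bochner integrals in $H$ and $K$ are literally the same finite sum, then passing to the limit in $K$-norm and using uniqueness of limits. The only cosmetic difference is that the paper constructs the approximating sequence via the pointwise monotone approximation of Da Prato--Zabczyk plus dominated convergence, while you invoke the standard $L^1$ simple-function approximation (Bochner's theorem) directly; both are standard and equivalent here.
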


\begin{proof}
For each $B \in \calb(K)$ we have
\begin{align*}
f^{-1}(B \cap H) = f^{-1}(B) \cap f^{-1}(H) = f^{-1}(B) \cap E = f^{-1}(B).
\end{align*}
Furthermore, by Lemma \ref{lemma-Kuratowski} we have $\calb(K)_H = \calb(H)$, and hence
\begin{align*}
f^{-1}(\calb(K)) = f^{-1}(\calb(K)_H) = f^{-1}(\calb(H)) \subset \cale.
\end{align*}
There is a countable subset $D \subset f(E)$ which is dense in $f(E)$ with respect to $\| \cdot \|_H$. Let $x \in f(E)$ be arbitrary. Then there exists a sequence $(x_n)_{n \in \bbn} \subset D$ such that $\| x_n - x \|_H \to 0$, which implies $\| x_n - x \|_K \to 0$ due to (\ref{embedding-inequ}). Furthermore, by (\ref{embedding-inequ}) and (\ref{f-H-integrable}) we have (\ref{f-K-integrable}), proving the first statement. For the proof of the second statement we proceed in two steps. Suppose first that the mapping $f$ is simple; that is, for some $n \in \bbn$ we have
\begin{align*}
f = \sum_{i=1}^n c_i \bbI_{B_i}
\end{align*}
with $c_i \in H$ and $B_i \in \cale$ for each $i=1,\ldots,n$. Then we have
\begin{align}\label{integral-simple}
\text{{\rm ($H$-)}} \int_E f \, d \mu = \sum_{i=1}^n c_i \cdot \mu(B_i) = \text{{\rm ($K$-)}} \int_E f \, d \mu.
\end{align}
Now, we consider the general situation. According to \cite[Lemma I.1.3]{Da_Prato} there exists a sequence $(f_n)_{n \in \bbn}$ of simple functions $f_n : E \to H$ such that $\| f_n(x) - f(x) \|_H \downarrow 0$ for each $x \in E$. Hence, for each $n \in \bbn$ and each $x \in E$ we have
\begin{align*}
\| f_n(x) - f(x) \|_H \leq \| f_1(x) - f(x) \|_H \leq \| f(x) \|_H + \| f_1(x) \|_H.
\end{align*}
By Lebesgue's dominated convergence theorem we obtain
\begin{align}\label{fn-f-H}
\int_E \| f_n - f \|_H \, d\mu \to 0,
\end{align}
and hence
\begin{align}\label{fn-H}
\int_E f_n \, d\mu \to \text{{\rm ($H$-)}} \int_E f \, d \mu \quad \text{in $(H,\| \cdot \|_H)$,}
\end{align}
where $\int_E f_n \, d\mu$ is defined according to (\ref{integral-simple}). By (\ref{embedding-inequ}) and (\ref{fn-f-H}) we have
\begin{align*}
\int_E \| f_n - f \|_K \, d\mu \to 0,
\end{align*}
and hence
\begin{align}\label{fn-K}
\int_E f_n \, d\mu \to \text{{\rm ($K$-)}} \int_E f \, d \mu \quad \text{in $(K,\| \cdot \|_K)$.}
\end{align}
Moreover, by (\ref{embedding-inequ}) and (\ref{fn-H}) we have
\begin{align}\label{fn-HK}
\int_E f_n \, d\mu \to \text{{\rm ($H$-)}} \int_E f \, d \mu \quad \text{in $(K,\| \cdot \|_K)$.}
\end{align}
Combining (\ref{fn-K}) and (\ref{fn-HK}) we arrive at (\ref{H-K-coincide}).
\end{proof}

With a similar proof, we obtain the following auxiliary result concerning the It\^{o} integral.

\begin{lemma}\label{lemma-Wiener-int-G-H}
Let $(H,K)$ be a pair of continuously embedded separable Hilbert spaces, and let $W$ be a standard $\bbr^{\infty}$-Wiener process on some stochastic basis $\bbb$. Let $A$ be an $\ell^2(H)$-valued $\calp$-$\calb(\ell^2(H))$-measurable process such that $\bbp$-almost surely
\begin{align*}
\int_0^t \| A_s \|_{\ell^2(H)}^2 \, ds < \infty, \quad t \in \bbr_+.
\end{align*}
Then the following statements are true:
\begin{enumerate}
\item $A$ is also an $\ell^2(K)$-valued $\calp$-$\calb(\ell^2(K))$-measurable process and we have $\bbp$-almost surely
\begin{align*}
\int_0^t \| A_s \|_{\ell^2(K)}^2 \, ds < \infty, \quad t \in \bbr_+.
\end{align*}
\item We have $\bbp$-almost surely
\begin{align*}
\text{{\rm ($H$-)}} \int_0^t A_s dW_s = \text{{\rm ($K$-)}} \int_0^t A_s dW_s, \quad t \in \bbr_+.
\end{align*}
\end{enumerate}
\end{lemma}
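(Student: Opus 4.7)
The plan is to mirror the two-step proof of Lemma \ref{lemma-Leb-int-G-H}, replacing the Bochner integral by the It\^{o} integral. The key preliminary observation is that the continuous embedding $(H,K)$ with constant $C$ from (\ref{embedding-inequ}) induces a continuous embedding $(\ell^2(H),\ell^2(K))$ with the same constant, since for $x = (x_n)_{n \in \bbn} \in \ell^2(H)$ one has
\begin{align*}
\| x \|_{\ell^2(K)}^2 = \sum_{n=1}^{\infty} \| x_n \|_K^2 \leq C^2 \sum_{n=1}^{\infty} \| x_n \|_H^2 = C^2 \| x \|_{\ell^2(H)}^2 .
\end{align*}

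For statement (1), I would apply Lemma \ref{lemma-Kuratowski} to the pair of separable Hilbert spaces $(\ell^2(H),\ell^2(K))$, yielding $\ell^2(H) \in \calb(\ell^2(K))$ and $\calb(\ell^2(H)) = \calb(\ell^2(K))_{\ell^2(H)}$. Then for every $B \in \calb(\ell^2(K))$ we have $A^{-1}(B) = A^{-1}(B \cap \ell^2(H)) \in \calp$, because $B \cap \ell^2(H) \in \calb(\ell^2(H))$ and $A$ is $\calp$-$\calb(\ell^2(H))$-measurable. The pathwise $\ell^2(K)$-square integrability follows at once from the embedding inequality above applied inside the time integral.

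For statement (2), I would proceed by localization and approximation by elementary processes. First reduce to bounded integrands by the stopping times $\sigma_n := \inf\{ t \geq 0 : \int_0^t \| A_s \|_{\ell^2(H)}^2 ds \geq n \}$, which makes $A \bbI_{\IL 0, \sigma_n \IR}$ lie in $L^2(\Omega \times \bbr_+ ; \ell^2(H))$ up to any fixed horizon $T$. By the standard density result, choose elementary predictable processes $A^{(k)} = \sum_{i=1}^{N_k} c_i^{(k)} \bbI_{F_i^{(k)}} \bbI_{(s_i^{(k)}, t_i^{(k)}]}$ with $c_i^{(k)} \in \ell^2(H)$ and $F_i^{(k)} \in \calf_{s_i^{(k)}}$ such that $\bbe \int_0^T \| A^{(k)}_s - A_s \bbI_{\IL 0, \sigma_n \IR}(s) \|_{\ell^2(H)}^2 ds \to 0$. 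For each elementary $A^{(k)}$ the It\^{o} integral reduces to the explicit finite sum
\begin{align*}
\sum_{i=1}^{N_k} c_i^{(k)} \bbI_{F_i^{(k)}} (W_{t_i^{(k)} \wedge t} - W_{s_i^{(k)} \wedge t}),
\end{align*}
which is manifestly the same element whether read in $H$ or in $K$.

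Finally, It\^{o}'s isometry in $H$ gives convergence of these elementary integrals in $L^2(\Omega ; H)$ to $(H\text{-})\int_0^t A_s \bbI_{\IL 0, \sigma_n \IR} dW_s$; by the embedding inequality the same sequence also converges in $L^2(\Omega ; K)$ to that same limit. On the other hand, the embedding inequality also yields $\bbe \int_0^T \| A^{(k)}_s - A_s \bbI_{\IL 0, \sigma_n \IR}(s) \|_{\ell^2(K)}^2 ds \to 0$, so It\^{o}'s isometry in $K$ identifies the $L^2(\Omega ; K)$-limit with $(K\text{-})\int_0^t A_s \bbI_{\IL 0, \sigma_n \IR} dW_s$. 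Uniqueness of limits in $K$ produces the identity on $\IL 0, \sigma_n \IR$, and sending $n \to \infty$ removes the localization. The only real obstacle is clean bookkeeping of the localization and the simultaneous approximation in both $\ell^2(H)$ and $\ell^2(K)$, but this is automatic from the embedding inequality; the rest is a direct transcription of the argument in Lemma \ref{lemma-Leb-int-G-H}.
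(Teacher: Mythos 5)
Your argument is correct and is essentially the proof the paper has in mind: the paper only remarks that Lemma \ref{lemma-Wiener-int-G-H} follows ``with a similar proof'' to Lemma \ref{lemma-Leb-int-G-H}, i.e.\ by approximating the integrand, observing that the approximating integrals agree in $H$ and $K$, and identifying the limits in both spaces via the embedding inequality, which is exactly your localization--elementary-process--It\^{o}-isometry scheme. The only cosmetic point is that for $c_i^{(k)} \in \ell^2(H)$ the elementary integral is itself an infinite series $\sum_j c_i^{(k),j}(W^j_{t_i \wedge t} - W^j_{s_i \wedge t})$ rather than a finite sum, but its partial sums coincide in $H$ and $K$ and converge in $H$, hence in $K$, to the same element (or one simply takes approximants with finitely many nonzero components), so your identification goes through unchanged.
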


\end{appendix}

\end{document}